\newcolumntype{P}[1]{>{\centering\arraybackslash}p{#1}}
\newtheorem{lemma}{Lemma}
\newtheorem{theorem}[lemma]{Theorem}
\newtheorem{corollary}[lemma]{Corollary}
\theoremstyle{remark}
\newcommand{\E}{\mathbb{E}}
\newcommand{\V}{\mathbb{V}}
\let\@mkboth\@gobbletwo
\let\@oddhead\@empty
\let\@evenhead\@empty
\title{Tree height and the asymptotic mean \\ of the Colijn--Plazzotta rank \\ of unlabeled binary rooted trees}
\author{Luc Devroye, Michael R.~Doboli, Noah A.~Rosenberg and Stephan Wagner}
\date\today
\begin{document}

\begin{abstract}
The Colijn--Plazzotta ranking is a bijective encoding of the unlabeled binary rooted trees with positive integers. We show that the rank $f(t)$ of a tree $t$ is closely related to its height $h$, the length of the longest path from a leaf to the root. We consider the rank $f(\tau_n)$ of a random $n$-leaf tree $\tau_n$ under each of three models: (i) uniformly random unlabeled unordered binary rooted trees, or unlabeled topologies; (ii) uniformly random leaf-labeled binary trees, or labeled topologies under the uniform model; and (iii) random binary search trees, or labeled topologies under the Yule--Harding model. Relying on the close relationship between tree rank and tree height, we obtain results concerning the asymptotic properties of $\log \log f(\tau_n)$. In particular, we find $\E \{\log_2 \log f(\tau_n)\} \sim 2 \sqrt{\pi n}$ for uniformly random unlabeled ordered binary rooted trees and uniformly random leaf-labeled binary trees, and for a constant $\alpha \approx 4.31107$, $\E\{\log_2 \log f(\tau_n)\} \sim \alpha \log n $ for leaf-labeled binary trees under the Yule--Harding model. We show that the mean of $f(\tau_n)$ itself under the three models is largely determined by the rank $c_{n-1}$ of the highest-ranked tree---the caterpillar---obtaining an asymptotic relationship with $\pi_n c_{n-1}$, where $\pi_n$ is a model-specific function of $n$. The results resolve open problems, providing a new class of results on an encoding useful in mathematical phylogenetics.
\end{abstract}


\maketitle
\footnotetext[1]
{Luc Devroye (lucdevroye@gmail.com) is with the School of Computer Science at McGill University in Montreal, Canada.}
\footnotetext[2]
{Michael R.~Doboli (mdoboli@stanford.edu) and Noah A.~Rosenberg (noahr@stanford.edu) are with the Department of Biology at Stanford University, California.}
\footnotetext[3]
{Stephan Wagner (stephan.wagner@math.uu.se) is with the Institute of Discrete Mathematics, TU Graz, Austria, and the Department of Mathematics, Uppsala University, Sweden.}

\section{Introduction}

The Colijn--Plazzotta rank $f(t)$ of a binary rooted tree $t$ is defined recursively as follows: if $\ell(t)$ and $r(t)$ are the left and right subtree, respectively, arranged in such a way that $f\big(\ell(t)\big) \geq f\big(r(t)\big)$, then
\[f(t) = \frac{f\big(\ell(t)\big) \, \big(f(\ell(t))-1\big)}{2} + 1 + f\big(r(t)\big).\]
The rank $1$ is assigned to a tree with a single leaf. 

Colijn--Plazzotta rank, or \emph{CP rank}, has been introduced as a tree statistic from the context of mathematical evolutionary biology. In the study of evolutionary trees, statistical summaries of trees are often used for characterizing the outcomes of evolutionary models and for statistical inference of the processes that have given rise to the trees~\cite{FischerEtAl2023}. CP rank has been used as a summary of tree shape in empirical scenarios in which trees of biological relationships are unconcerned with leaf labels, such as in examples with trees of sequences from the same pathogenic organism~\cite{Colijn+Plazzotta2018}.

Informally, for a fixed number of leaves, the CP rank is lowest for balanced trees and greatest for unbalanced trees. It has therefore been proposed as a measure of tree balance~\cite{FischerEtAl2023, Rosenberg2021}. In a compilation of mathematical results for tree balance indices that capture many different features of rooted trees, Fischer et al.~\cite{FischerEtAl2023} have listed a set of basic properties that are of interest for any balance index. Among these are the minimal and maximal values of the index across all trees with a fixed number of leaves, and the mean and variance of the index under the two most frequently used probabilistic models in mathematical phylogenetics: the uniform model, which assigns equal probability to all binary rooted labeled trees with a fixed number of leaves, and the Yule--Harding model, in which, conditional on the number of leaves, the probability of a binary rooted labeled tree is proportional to the number of sequences of bifurcations that can give rise to the tree. The mathematical properties of balance indices assist in characterizing the way that balance indices relate to one another and how they perform in empirical settings.

The trees of minimal and maximal CP rank for a fixed number of leaves have been characterized~\cite{Rosenberg2021}, and indeed the asymptotic CP ranks of these trees in terms of the number of leaves have also been obtained~\cite{DoboliEtAl2024, Rosenberg2021}. The mean and variance under the uniform and Yule--Harding models have been listed as open problems~\cite[p.~243]{FischerEtAl2023}.

We show here that the asymptotic mean and variance under the Yule--Harding model can be obtained by a connection between this model in the phylogenetics setting and the nearly equivalent formulation of random binary search trees in computer science. First, we show that the order of magnitude of the CP rank of a tree is determined by the height of the tree, the greatest distance from the root to a leaf. By connecting the CP rank to tree height and in turn to probabilistic results for the height, we obtain distributional properties of the CP rank under the Yule--Harding model. We also obtain related results on the closely related uniform model on labeled binary rooted trees and the uniform model on \emph{un}labeled binary rooted trees.

\bigskip
\section{Tree height and the Colijn--Plazzotta rank}

We consider all trees to be binary and rooted. The \emph{height} of a tree is the length in edges of the longest path from the root to a leaf. Two special families of binary trees with $n$ leaves play a key role in our analysis: the caterpillars, and the pseudocaterpillars. In a \emph{caterpillar} with $n$ leaves, $n \geq 1$, every non-leaf has at least one leaf child. This condition forces each caterpillar to consist of a chain of $n-1$ internal (i.e.~non-leaf) nodes to which a layer of external nodes is added. The \emph{pseudocaterpillars}~\cite{Rosenberg2007} (or \emph{4-pseudocaterpillars} in the terminology of \cite{Alimpiev2021}) can be constructed as follows for $n \geq 4$: start with a chain of $n-3$ internal nodes. Give the bottom node in the chain two children, and finally, complete the tree by adding a layer of $n$ external nodes. Caterpillars have height $n-1$, and pseudocaterpillars have height $n-2$.

Among unlabeled trees with a fixed number of leaves, Rosenberg~\cite[Corollary 10]{Rosenberg2021} found that the tree with the largest CP rank was the caterpillar. The CP rank of the caterpillar tree with $n$ leaves can be computed recursively by a sequence $c_n$~\cite[Theorem 9]{Rosenberg2021}. It is convenient to shift the index of the sequence by $1$ so that $c_k$ instead corresponds to the CP rank of the caterpillar with height $k$ and $k+1$ leaves. The sequence $c_k$ begins 1, 2, 3, 5, 12, 68, 2280 starting at $k=0$ (OEIS A108225 \cite{OEIS}). 

\begin{lemma}\label{rank-height}
Let the sequence $c_k$ be defined by $c_0 = 1$ and $c_{k+1} = c_{k}(c_{k}-1)/2 + 2$ for $k \geq 0$. For every tree $t$ of height $h$, we have
\[c_h \leq f(t) < c_{h+1}.\]
\end{lemma}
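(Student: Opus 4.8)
The plan is to prove both inequalities simultaneously by induction on the height $h$. The base case $h = 0$ is immediate: the only tree of height $0$ is the single leaf, for which $f(t) = 1 = c_0$, and since $c_1 = 2$ the required chain $c_0 \le f(t) < c_1$ reads $1 \le 1 < 2$. For the inductive step I fix $h \ge 1$ and a tree $t$ of height $h$, and write $t$ as the join of its subtrees $\ell(t)$ and $r(t)$, labeled so that $f(\ell(t)) \ge f(r(t))$. The one structural fact I would extract at the outset is that the heights $h_\ell, h_r$ of these subtrees satisfy $\max(h_\ell, h_r) = h - 1$; in particular both are at most $h-1$, and at least one equals $h-1$. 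This is the only place where the tree geometry enters, and everything else is arithmetic driven by the recursion.

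For the upper bound I apply the inductive hypothesis to both subtrees: since $h_\ell, h_r \le h-1$, I get $f(\ell(t)) < c_h$ and $f(r(t)) < c_h$, and because ranks are positive integers, $f(\ell(t)) \le c_h - 1$ and $f(r(t)) \le c_h - 1$. As $x \mapsto x(x-1)/2$ is increasing for $x \ge 1$ and the defining formula for $f(t)$ is increasing in each of $f(\ell(t))$ and $f(r(t))$, substituting the maximal values $c_h - 1$ yields
\[ f(t) \le \frac{(c_h-1)(c_h-2)}{2} + 1 + (c_h - 1) = \frac{c_h(c_h-1)}{2} + 1 = c_{h+1} - 1 < c_{h+1}, \]
where the final equality is precisely the recursion $c_{h+1} = c_h(c_h-1)/2 + 2$.

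For the lower bound I would use that at least one subtree, say $s$, has height exactly $h-1$, so by the inductive hypothesis $f(\ell(t)) \ge f(s) \ge c_{h-1}$, the first inequality holding because $\ell(t)$ is the higher-ranked subtree. Combining $f(r(t)) \ge 1$ with the same monotonicity gives
\[ f(t) \ge \frac{c_{h-1}(c_{h-1}-1)}{2} + 1 + 1 = c_h, \]
again by the recursion (now with index $h-1$). This closes the induction.

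I do not anticipate a serious obstacle, since the argument is a clean induction and both bounds are in fact tight: equality on the left is attained by the caterpillar, and the value $c_{h+1}-1$ on the right by the pseudocaterpillar. The only points requiring a little care are the monotonicity of $x \mapsto x(x-1)/2$ (valid because all ranks are at least $1$), the observation that the rank of the higher-ranked subtree dominates that of whichever subtree attains height $h-1$, and the check that the telescoping into $c_{h+1}-1$ and $c_h$ uses exactly the stated recursion.
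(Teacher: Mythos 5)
Your induction is sound and follows essentially the same route as the paper's proof: induction on the height $h$, with the same algebraic use of the recursion $c_{h+1} = c_h(c_h-1)/2 + 2$ at both ends. The one organizational difference is in the induction step. The paper first pins down $h_\ell = h-1$: from the induction hypothesis and the ordering $f\big(r(t)\big) \le f\big(\ell(t)\big)$ one gets $c_{h_r} \le f\big(\ell(t)\big) < c_{h_\ell+1}$, and then the monotonicity of the sequence $(c_k)$ forces $h_r \le h_\ell$. You avoid identifying which subtree attains the height: for the lower bound you observe that whichever subtree $s$ has height exactly $h-1$ satisfies $f\big(\ell(t)\big) \ge f(s) \ge c_{h-1}$, which is a clean shortcut, and for the upper bound you bound both subtrees uniformly.

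Two caveats. First, your upper-bound step ``$h_\ell, h_r \le h-1$ implies $f\big(\ell(t)\big), f\big(r(t)\big) < c_h$'' is not purely the induction hypothesis: the hypothesis gives $f\big(\ell(t)\big) < c_{h_\ell+1}$, and passing from $c_{h_\ell+1}$ to $c_h$ when $h_\ell < h-1$ requires that $(c_k)$ is increasing. The paper invokes this fact explicitly (citing Lemma 8 of Rosenberg); you use it silently, and your list of ``points requiring care'' mentions only the monotonicity of $x \mapsto x(x-1)/2$, which is a different statement. The missing fact is easy to supply---$c_{k+1}-c_k = (c_k^2 - 3c_k + 4)/2 > 0$ because the quadratic $x^2-3x+4$ has negative discriminant---but it should be stated. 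Second, your closing remark that the value $c_{h+1}-1$ is attained by the pseudocaterpillar is false for $h \ge 3$: the pseudocaterpillar of height $3$ (five leaves) has rank $8$, whereas $c_4 - 1 = 11$, which is the rank of the complete binary tree with $8$ leaves. In general the maximum rank $c_{h+1}-1$ within height $h$ is attained by the complete tree of height $h$ (the join of two copies of the tree of rank $c_h - 1$), not by the pseudocaterpillar, whose rank $d_h$ satisfies $d_h < c_{h+1}-1$ for $h \ge 3$. This remark plays no role in your induction, so the proof stands once the monotonicity of $(c_k)$ is made explicit.
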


\begin{proof}
The proof proceeds by induction on $h$. For $h=0$, the tree consists of a single leaf, and we have $1 = c_0 =  f(t) < c_1 = 2$. Thus, the statement holds in this case, and we can proceed with the induction step. 

For a tree $t$ of height $h$, suppose $h_{\ell} < h$ and $h_r < h$ are the heights of subtrees $\ell(t)$ and $r(t)$, respectively. From the induction hypothesis for trees of height less than $h$ and the left--right arrangement so that $f\big( \ell(t) \big) \geq f\big(r(t) \big)$, it follows that
\[c_{h_r} \leq f(r(t)) \leq f(\ell(t)) < c_{h_{\ell}+1}.\]
The sequence $c_k$ is increasing~\cite[Lemma 8]{Rosenberg2021}, so that $h_r < h_\ell + 1$, and hence, $h_r \leq h_{\ell}$. 

Because $h = \max(h_{\ell},h_r) + 1$, it follows that $h_{\ell} = h - 1$. Thus, we have, again by the induction hypothesis,
\begin{align*}
f(t) &= \frac{f\big(\ell(t)\big) \, \big(f\big(\ell(t)\big)-1\big)}{2} + 1 + f\big(r(t)\big) \\
&\geq \frac{f\big(\ell(t)\big) \, \big(f\big(\ell(t)\big)-1\big)}{2} + 1 + 1 \\
&\geq \frac{c_{h-1}(c_{h-1}-1)}{2} + 2 \\
&= c_h,
\end{align*} 
which proves the lower bound. On the other hand,
\begin{align*}
f(t) &= \frac{f\big(\ell(t)\big) \, \big(f(\ell(t))-1 \big)}{2} + 1 + f\big(r(t)\big) \\
&\leq \frac{f\big(\ell(t)\big) \, \big(f(\ell(t))-1\big)}{2} + 1 + f\big(\ell(t)\big) \\
&= \frac{f\big(\ell(t)\big) \, \big(f\big(\ell(t)\big)+1\big)}{2} + 1 \\
&\leq \frac{(c_h-1)c_h}{2} + 1 \\
&= c_{h+1} - 1,
\end{align*} 
 proving the upper bound. This completes the induction.
\end{proof}

We conclude that the behavior of the height is to a great extent responsible for the behavior of the Colijn--Plazzotta rank of a tree. Indeed, because the CP rank is bijective with the positive integers, the lemma implies that as the positive integers are traversed, for each $h\geq 0$, the ranking proceeds through trees with height $h$, then proceeds to those with height $h+1$, and so on. We immediately obtain the following corollaries (which are well known, see~\cite{HPR1992}).
\begin{corollary}
For $h\geq 0$, the number of unlabeled binary rooted trees with height at most $h$ is $c_{h+1}-1$.
\end{corollary}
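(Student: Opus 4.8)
The plan is to read off the result directly from two facts that are already in hand: the Colijn--Plazzotta rank $f$ is a \emph{bijection} from the set of unlabeled binary rooted trees onto the positive integers, and Lemma \ref{rank-height} confines the rank of any tree of height $h$ to the integer window $c_h \leq f(t) \leq c_{h+1}-1$. Combining these, I would prove the clean statement that a tree has height at most $h$ \emph{if and only if} its rank lies in $\{1,2,\ldots,c_{h+1}-1\}$; the count $c_{h+1}-1$ then follows at once from injectivity and surjectivity of $f$.

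First I would establish the forward inclusion using the upper bound. If $t$ has height $h' \leq h$, then Lemma \ref{rank-height} gives $f(t) < c_{h'+1}$, and since the sequence $c_k$ is increasing (already cited from \cite[Lemma 8]{Rosenberg2021}), we have $c_{h'+1} \leq c_{h+1}$, so $f(t) \leq c_{h+1}-1$. Thus every tree of height at most $h$ is assigned a rank in $\{1,\ldots,c_{h+1}-1\}$, and distinct trees receive distinct such ranks by injectivity. Conversely, I would use the lower bound together with surjectivity: fix any integer $m$ with $1 \leq m \leq c_{h+1}-1$, let $t$ be the unique tree with $f(t)=m$, and let $h'$ be its height. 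Lemma \ref{rank-height} gives $c_{h'} \leq m < c_{h+1}$, hence $c_{h'} < c_{h+1}$, and monotonicity of $c_k$ forces $h' \leq h$. So each of the $c_{h+1}-1$ integers in the window is realized by exactly one tree, and that tree has height at most $h$.

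Putting the two inclusions together, $f$ restricts to a bijection between the trees of height at most $h$ and the set $\{1,\ldots,c_{h+1}-1\}$, giving exactly $c_{h+1}-1$ such trees. There is no serious obstacle here, since the statement is an immediate consequence of Lemma \ref{rank-height} and the bijectivity of $f$; the one point I would be careful to make explicit is that \emph{both} bounds of the lemma are used—the upper bound $f(t)<c_{h+1}$ for the forward inclusion and the lower bound $c_{h'}\leq f(t)$ for the converse. Indeed, the upper bound alone yields only the estimate $\sum_{h'=0}^{h}(c_{h'+1}-c_{h'}) = c_{h+1}-1$ as an upper bound on the count via a telescoping sum, and one needs the lower bound (equivalently, surjectivity onto the full initial segment) to promote this to the exact equality.
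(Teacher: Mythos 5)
Your proof is correct and follows essentially the same route as the paper, which derives the corollary immediately from Lemma \ref{rank-height} together with the bijectivity of the CP rank: trees of height at most $h$ correspond exactly to the ranks $\{1,\ldots,c_{h+1}-1\}$. You merely spell out the two inclusions (upper bound plus injectivity, lower bound plus surjectivity) that the paper treats as immediate.
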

\begin{corollary}
For $h\geq 0$, the number of unlabeled binary rooted trees with height exactly $h$ is $c_{h+1}-c_h$.
\end{corollary}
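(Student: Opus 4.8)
The plan is to combine Lemma~\ref{rank-height} with the defining property of the Colijn--Plazzotta rank, namely that $f$ is a bijection from the set of unlabeled binary rooted trees onto the positive integers. First I would record the one structural fact about the sequence $c_k$ that the argument needs: since $c_0 = 1$ and $c_k$ is strictly increasing, the half-open intervals $[c_h, c_{h+1})$ for $h \geq 0$ are pairwise disjoint, and their union is exactly $\{1, 2, 3, \dots\}$. Thus every positive integer lies in exactly one such interval.

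Next I would translate Lemma~\ref{rank-height} into a statement about ranks and upgrade its one-sided containment into an exact characterization. The lemma says that every tree of height $h$ has rank in $[c_h, c_{h+1})$; I want the converse as well. Suppose $m$ is a positive integer with $m \in [c_h, c_{h+1})$, and let $t = f^{-1}(m)$ be the unique tree of rank $m$, whose height is some $h'$. By the lemma, $m \in [c_{h'}, c_{h'+1})$; since the intervals are disjoint and $m$ already lies in $[c_h, c_{h+1})$, I must have $h' = h$. Hence the ranks of the trees of height exactly $h$ are precisely the integers in $[c_h, c_{h+1})$.

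Finally, because $f$ is a bijection, the number of trees of height exactly $h$ equals the number of integers in $[c_h, c_{h+1})$, which is $c_{h+1} - c_h$, completing the count.

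The argument is short and I do not anticipate a serious obstacle; the only point requiring care is that Lemma~\ref{rank-height} by itself supplies only the implication ``height $h$ implies rank in $[c_h, c_{h+1})$,'' not its converse. The converse is furnished not by a separate combinatorial computation but by bijectivity of $f$ together with the fact that the intervals partition the positive integers, and recognizing that these two facts jointly force the exact characterization is the essential step. As an alternative route, one could instead derive this corollary directly from the preceding corollary by the telescoping difference $(c_{h+1}-1) - (c_h - 1) = c_{h+1} - c_h$, with the boundary case $h = 0$ handled by the convention that $c_0 - 1 = 0$ trees have height at most $-1$.
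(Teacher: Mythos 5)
Your proof is correct and follows essentially the same route as the paper, which derives this corollary immediately from Lemma~\ref{rank-height} together with the bijectivity of $f$: since the intervals $[c_h, c_{h+1})$ partition the positive integers, the ranks of height-$h$ trees are exactly the integers in $[c_h, c_{h+1})$, giving the count $c_{h+1}-c_h$. Your explicit verification of the converse implication (rank in the interval forces height $h$) is precisely the detail the paper leaves implicit in the phrase ``the ranking proceeds through trees with height $h$, then proceeds to those with height $h+1$.''
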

The sequence $c_{h+1}-1$ begins at $h=0$ with values 1, 2, 4, 11, 67, 2279 (OEIS A006894 \cite{OEIS}). The sequence $c_{h+1}-c_h$ begins at $h=0$ with values 1, 1, 2, 7, 56, 2212 (OEIS A002658 \cite{OEIS}). 

According to~\cite[Corollary 14]{Rosenberg2021}, $c_k \sim 2 \gamma^{2^k}$ for a constant $\gamma \approx 1.11625$ as $k \to \infty$; note that $\gamma = \beta^2$ for the constant $\beta$ in~\cite{Rosenberg2021}, owing to the shift by 1 in $c_k$ relative to the indexing in~\cite{Rosenberg2021}. We immediately obtain the following result.
\begin{corollary}\label{cor:CP_vs_height}
For a tree $t$ with height $h$, we have
\[2^h + O(1) \leq \log_{\gamma} f(t) \leq 2^{h+1} + O(1),\]
and thus
\[\log_2 \log_{\gamma} f(t) = \log_2 \log f(t) + O(1) = h + O(1).\]
\end{corollary}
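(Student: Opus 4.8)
The plan is to combine the two-sided bound of Lemma~\ref{rank-height} with the stated asymptotics $c_k \sim 2\gamma^{2^k}$. First I would record that, because $c_k/\gamma^{2^k} \to 2$ and every term $c_k$ is positive, the ratio stays in a fixed compact subinterval of $(0,\infty)$; hence there are constants $0 < A \le B$ with $A\gamma^{2^k} \le c_k \le B\gamma^{2^k}$ for \emph{all} $k \ge 0$. Taking $\log_\gamma$ (recall $\gamma > 1$, so $\log\gamma > 0$) then yields the uniform estimate
\[
\log_\gamma c_k = 2^k + O(1),
\]
with an absolute implied constant.

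Next I would apply $\log_\gamma$ to the inequality $c_h \le f(t) < c_{h+1}$ of Lemma~\ref{rank-height}. Substituting the displayed estimate at the indices $h$ and $h+1$ gives
\[
2^h + O(1) \le \log_\gamma f(t) < 2^{h+1} + O(1),
\]
which is the first assertion. For the second assertion I would take $\log_2$ across this chain, restricting to $h \ge 1$ so that $\log_\gamma f(t) \ge \log_\gamma c_1 > 0$ and the outer logarithm is defined. Since $\log_2(2^h + O(1)) = h + O(2^{-h}) = h + O(1)$ for large $h$ (the finitely many small values of $h$ being absorbed into the $O(1)$), and similarly $\log_2(2^{h+1} + O(1)) = h + 1 + O(1)$, the chain collapses to $\log_2\log_\gamma f(t) = h + O(1)$. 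Finally, the change-of-base identity $\log_2\log_\gamma f(t) = \log_2\log f(t) - \log_2\log\gamma$ shows that replacing $\log_\gamma$ by $\log$ inside merely shifts the value by the constant $\log_2\log\gamma$, giving $\log_2\log f(t) = h + O(1)$ as well.

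The only genuine care needed is uniformity: the asymptotic $c_k \sim 2\gamma^{2^k}$ controls large $k$ but says nothing a priori about individual small terms, so the main point is to upgrade it to the two-sided multiplicative bound valid for \emph{every} $k$, which is exactly what lets the implied constants in the final $O(1)$ be absolute rather than $k$-dependent. The boundary case $h = 0$ (the single leaf, where $f(t) = 1$ and $\log_\gamma f(t) = 0$) must simply be excluded from the doubly logarithmic statement, since $\log_2 0$ is undefined; this exclusion is harmless for the asymptotic conclusion.
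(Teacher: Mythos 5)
Your proposal is correct and follows exactly the route the paper intends: the paper states this corollary as an immediate consequence of Lemma~\ref{rank-height} together with $c_k \sim 2\gamma^{2^k}$, with no written proof, and your argument is the natural careful fleshing-out of that step. Your two points of caution---upgrading the asymptotic to a two-sided bound valid for all $k$ so the implied constants are absolute, and excluding the degenerate case $h=0$ where $\log_2 \log f(t)$ is undefined---are both correct and are precisely the details the paper leaves implicit.
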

We now analyze the behavior of the CP rank of random trees, which is mainly determined by the height. Indeed, we proceed by making use of extensive probabilistic results available on tree height under different sets of assumptions. 
\bigskip
\section{Uniformly random unlabeled binary trees}
\label{sec:unlabeled}

Consider an unlabeled binary rooted tree on $n$ leaves, where each internal node has two children. A distinction exists between trees in which the left-right order of the children matters (\emph{ordered binary trees}) and those in which the order is irrelevant (\emph{unordered binary trees}, or \emph{unlabeled topologies} in the terminology of mathematical phylogenetics, or \emph{Otter trees} after \cite{Otter1948}). 

Let $\tau_n$ be a uniformly random ordered binary tree, also called a random \emph{Catalan tree} because the number of such trees on $n \geq 1$ leaves is 
$$
K_{n-1} = \frac{1}{n} \binom{2n-2}{n-1},
$$
where $K_n$ is the $n$-th Catalan number~\cite[Exercise 5]{Stanley2015}. A tree under this model of randomness is generated by considering a uniformly random ordered binary tree on $n-1$ (internal) nodes and adding a layer of external nodes. The random Catalan tree has been thoroughly studied~\cite[p.~224]{Sedgewick+Flajolet1996}. We denote the CP rank of a random Catalan tree by $C_n$ ($C$ for \underline{C}atalan).

Let $\tau'_n$ be a uniformly random unordered binary tree, a uniformly random Otter tree. The number of such trees can be calculated recursively. The exact value $U_n$ for the number of such trees on $n$ leaves follows
\begin{align}
\label{eq:Wedderburn}
U_n = \begin{cases}
    1, & n=1, \\
    \sum_{j=1}^{(n-1)/2} U_j U_{n-j}, & \text{odd } n \geq 3, \\
    \bigg( \sum_{j=1}^{n/2 - 1} U_j U_{n-j} \bigg) + \frac{U_{n/2} (U_{n/2} + 1)}{2}, & \text{even } n \geq 2. 
\end{cases}
\end{align}
The asymptotic approximation follows~\cite{Harding1971, Otter1948} 
\begin{align}\label{ottercount}
  U_n \sim \big(1+o(1)\big) \frac{\lambda}{n^{3/2} \rho^n},   
\end{align} 
where $\lambda \approx 0.31878$ and $\rho \approx 0.40270$.
The CP rank of a random Otter tree is denoted by $O_n$ ($O$ for \underline{O}tter).

To understand Theorem \ref{uniformtree}, we define a \emph{theta random variable} as a random variable with distribution function~\cite{Devroye1997}
\begin{equation}\label{eq:theta}
F(x) =
  \frac{4 \pi^{5/2}}{x^3} \sum_{j=1}^\infty j^2
      e^{-\pi^2 j^2/x^2} 
      =
  \sum_{j=-\infty}^\infty (1-2j^2x^2) e^{-j^2 x^2}~, x > 0.
\end{equation}
CP rank is defined for unordered binary trees. To extend the CP rank to ordered binary trees, we compute the CP rank of the unordered binary tree associated with an ordered binary tree.

\begin{theorem}\label{uniformtree}
(i) Let $\tau_n$ be a uniformly random unlabeled ordered binary tree with $n$ leaves, with CP rank $C_n=f(\tau_n)$. Then
$$
\mathbb{E} \{ \log_2 \log C_n \} \sim 2 \sqrt{\pi n},
$$
and 
$$
\frac{\log_2 \log C_n}{2\sqrt{n}}
$$
converges in distribution to a theta random variable as defined by~\eqref{eq:theta}.

(ii) Let $\tau'_n$ be a uniformly random unlabeled unordered binary tree with $n$ leaves, with CP rank $O_n=f(\tau'_n)$. Then there exists a constant $\kappa = \lambda^{-1} \approx 3.13699$
such that
$$
\mathbb{E} \{ \log_2 \log O_n \} \sim \kappa \sqrt{n},
$$
and 
$$
\frac{\log_2 \log O_n}{\kappa\sqrt{n/\pi}}
$$
converges in distribution to a theta random variable as defined by~\eqref{eq:theta}.
\end{theorem}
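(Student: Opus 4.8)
The overarching strategy is to transfer both statements to the height of the underlying random tree, exploiting Corollary \ref{cor:CP_vs_height}. Write $H_n$ for the height of $\tau_n$ in part (i) and $H_n'$ for the height of $\tau_n'$ in part (ii). Since the CP rank of an ordered tree is defined through its associated unordered tree and height is order-invariant, the reduction is legitimate in both parts. The corollary supplies a \emph{uniform} deterministic constant $M$ with
\[
\bigl| \log_2 \log C_n - H_n \bigr| \le M
\qquad\text{and}\qquad
\bigl| \log_2 \log O_n - H_n' \bigr| \le M .
\]
Consequently the mean asymptotics and the limit laws for $\log_2\log C_n$ and $\log_2\log O_n$ follow from the corresponding statements for $H_n$ and $H_n'$: for the means, $|\E\{\log_2\log C_n\} - \E\{H_n\}| \le M$ is negligible against a quantity of order $\sqrt n$; for the limit laws, dividing by $2\sqrt n$ (respectively $\kappa\sqrt{n/\pi}$) sends the $O(1)$ discrepancy to $0$, so Slutsky's theorem lets us replace $\log_2\log C_n$ by $H_n$ and $\log_2\log O_n$ by $H_n'$.

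Part (i) then reduces to the classical analysis of the height of a random Catalan tree. I would invoke the results of de Bruijn, Knuth, and Rice and of Flajolet and Odlyzko: the height $H_n$ of a uniformly random binary tree with $n$ leaves satisfies $\E\{H_n\} \sim 2\sqrt{\pi n}$, and $H_n/(2\sqrt n)$ converges in distribution to the theta law of \eqref{eq:theta}, whose mean equals $\sqrt\pi$ (consistent with the two displays in the statement). Convergence of the mean, as opposed to mere convergence in distribution, is guaranteed because the moments of $H_n/\sqrt n$ converge to those of the limit, which is part of the cited analysis. Combining these facts with the reduction above yields both assertions of (i).

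Part (ii) is the substantive case. I would appeal to the singularity-analysis and Pólya-operator treatment of the height of random non-plane (unordered) binary trees due to Broutin and Flajolet, which shows that $H_n'/\sqrt n$ again converges to a theta-type law. The normalizing scale is governed by the singular expansion of the Otter generating function $U(z) = \sum_n U_n z^n$ at its dominant singularity $\rho$; the square-root coefficient there is precisely what produces the count asymptotic \eqref{ottercount}, and carrying that data through the height-restricted generating functions yields the scale $\kappa\sqrt{n/\pi}$ with $\kappa = \lambda^{-1}$. Because the theta law has mean $\sqrt\pi$, the distributional statement $H_n'/(\kappa\sqrt{n/\pi})\to\Theta$ gives $\E\{H_n'\} \sim \kappa\sqrt{n/\pi}\cdot\sqrt\pi = \kappa\sqrt n$, matching the claimed mean once moment convergence is invoked exactly as in part (i).

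The main obstacle is part (ii): establishing the theta limit for the Otter-tree height and, above all, pinning down the constant as exactly $\lambda^{-1}$. This requires the unordered (Pólya) bookkeeping---symmetric pairs of equal subtrees force the half-sum correction visible in \eqref{eq:Wedderburn}---to be propagated through the height-restricted generating functions, followed by a careful singularity analysis matching the square-root data of $U(z)$ to the scale of the limiting height. By contrast, the plane (ordered) case of part (i) is entirely classical, and in both parts the passage from height to CP rank is the routine consequence of Corollary \ref{cor:CP_vs_height} described above.
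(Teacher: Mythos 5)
Your proposal is correct and follows essentially the same route as the paper: reduce both parts to the height of the random tree via the uniform $O(1)$ bound of Corollary~\ref{cor:CP_vs_height}, then import the height asymptotics of Flajolet--Odlyzko for the ordered (Catalan) case and of Broutin--Flajolet for the unordered (Otter) case, finishing with Slutsky's theorem. The only cosmetic difference is that you recover the mean in part (ii) from the theta limit law plus moment convergence, whereas the paper cites the expectation result $\mathbb{E}\{H'_n\}/\sqrt{n} \to \kappa$ directly from Broutin--Flajolet; both rest on the same cited theorems.
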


\begin{proof}
(i) The statement on $\tau_n$ is a consequence of a result of Flajolet and Odlyzko \cite[Theorem B]{FlajoletOdlyzko1982} 
about the height $H_n$ of $\tau_n$: 
$\mathbb{E} \{ H_n \} /\sqrt{n} \to 2 \sqrt{\pi}$ as 
$n \to \infty$,
and ${H_n }/{(2\sqrt{n})}$
tends in distribution to a theta random variable. By Corollary~\ref{cor:CP_vs_height}, the difference $\log_2 \log C_n - H_n$ is $O(1)$, so that
$$\frac{\log_2 \log C_n - H_n}{2\sqrt{n}}$$
is $O(n^{-1/2})$; for any sequence of random trees of increasing size, this quantity goes to $0$ (almost sure convergence, and hence, convergence in probability). The statement on the expected value now follows from linearity of expectation, and the statement on convergence in distribution follows from Slutsky's theorem applied to the convergence in distribution of  ${H_n }/{(2\sqrt{n})}$ and the convergence in probability to 0 of $(\log_2 \log C_n - H_n)/(2\sqrt{n})$.

(ii) The statement on $\tau'_n$ follows in the same fashion from the results of Broutin and Flajolet (\cite[Theorem 1 and Theorem 5]{BroutinFlajolet2008}, \cite[Theorem 1 and Theorem 3]{BroutinFlajolet2012}) on the height of unlabeled unordered binary trees. These state that the height $H'_n$ of a random unlabeled unordered binary tree with $n$ leaves satisfies $\mathbb{E} \{ H'_n \} /\sqrt{n} \to \kappa$, and that ${H'_n }/{(\kappa \sqrt{n/\pi})}$ tends in distribution to a theta random variable. We remark here that our notation is slightly different from that of Broutin and Flajolet: our value of $\lambda$ is $\lambda/(2 \sqrt{\pi})$ in the notation of \cite{BroutinFlajolet2012}, and our distribution function $F(x)$ in~\eqref{eq:theta} is $1-\Theta(2x)$ in their notation.
\end{proof}


\bigskip
\section{Uniformly random leaf-labeled binary trees}
\label{sec:uniform}

A leaf-labeled binary tree with $n$ leaves is a binary tree in which the leaves are bijectively labeled from $1$ to $n$, and in which each internal node has two children. The children are unordered. Such trees are also called \emph{labeled topologies} or \emph{cladograms}. 

We consider a uniformly random cladogram $\tau_n$. The number of such trees is
\begin{equation}
\label{eq:labeled}
(2n-3)\cdot(2n-5)\cdots 3\cdot 1 = \frac{1}{2^{n-1}} \, \frac{(2n-2)!} {(n-1)!},
\end{equation}
all of which are equally likely under this model of randomness. The CP rank of this a random cladogram is denoted by $L_n$ ($L$ for \underline{l}abeled). 

A model of uniformly random cladograms is a special case of more general models on the cladograms, such as Ford's alpha-splitting model~\cite{Ford2005, Ford2006} and Aldous's beta-splitting model~\cite{Aldous1996, Aldous2001}. In particular, Aldous~\cite[Proposition 4, $\beta=-\frac{3}{2}$ case]{Aldous1996} showed that the expected height of a random cladogram satisfies
$$
\mathbb{E} \left\{ H_n \right\} \sim 2 \sqrt{\pi n}.
$$

It is worth pointing out that this result (including the constant $2\sqrt{\pi}$) is the same as for uniformly random unlabeled ordered binary trees 
(compare to Theorem~\ref{uniformtree}i). This is no coincidence: for every unlabeled ordered binary tree on $n$ leaves, there are $n!$ possibilities to label the leaves and turn it into a leaf-labeled ordered binary tree. Likewise, precisely $2^{n-1}$ possibilities turn a labeled unordered binary tree on $n$ leaves into a labeled ordered binary tree (by switching the order of the children at the internal nodes). For this reason, the distribution of the height and any other parameters that do not depend on labels or order is the same for three uniform models: unlabeled ordered, labeled unordered, and labeled ordered binary trees~\cite[Section 3.1]{Disanto2022}. In particular, the following result is equivalent to part (i) of Theorem~\ref{uniformtree}.

\begin{theorem}\label{cladogram}
Let $\tau_n$ be a uniformly random leaf-labeled binary tree with $n$ leaves, with CP rank $L_n=f(\tau_n)$. Then
$$
\mathbb{E} \{ \log_2 \log L_n \} \sim 2 \sqrt{\pi n},
$$
and 
$$
\frac{\log_2 \log L_n}{2\sqrt{n}}
$$
converges to a theta distribution.
\end{theorem}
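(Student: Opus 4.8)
The plan is to reduce the claim to part~(i) of Theorem~\ref{uniformtree} by exploiting the model equivalence noted just above the statement. The key observation is that the height $H_n$ of a uniformly random cladogram is distributed exactly as the height of a uniformly random unlabeled ordered binary tree on $n$ leaves: both arise from the uniform distribution on \emph{labeled ordered} binary trees by forgetting, respectively, the ordering or the labels, and height depends on neither. Hence the Flajolet--Odlyzko asymptotics transfer verbatim---$\mathbb{E}\{H_n\}/\sqrt{n} \to 2\sqrt{\pi}$ (equivalently, Aldous's result quoted in the text) and $H_n/(2\sqrt{n})$ converges in distribution to a theta random variable.

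First I would invoke Corollary~\ref{cor:CP_vs_height}, which yields the deterministic bound $\log_2 \log L_n - H_n = O(1)$ for every realization of $\tau_n$. Dividing by $2\sqrt{n}$ shows that
$$
\frac{\log_2 \log L_n - H_n}{2\sqrt{n}}
$$
is $O(n^{-1/2})$ and therefore tends to $0$ almost surely, and hence in probability. The statement on the expected value then follows at once from linearity of expectation: $\mathbb{E}\{\log_2 \log L_n\} = \mathbb{E}\{H_n\} + O(1) \sim 2\sqrt{\pi n}$.

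For the distributional conclusion I would apply Slutsky's theorem to the convergence in distribution of $H_n/(2\sqrt{n})$ to the theta law together with the convergence in probability to $0$ of the normalized difference above, obtaining that $\log_2 \log L_n/(2\sqrt{n})$ converges in distribution to the same theta random variable. This is precisely the argument carried out for $C_n$ in the proof of Theorem~\ref{uniformtree}(i), and no new probabilistic input is required.

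I do not expect any genuine obstacle beyond justifying the model equivalence; the entire difficulty is already absorbed into the cited height results. The one point that warrants an explicit remark is that height is a function of the underlying unlabeled unordered tree shape alone, so that its distribution is indeed invariant across the uniform labeled-unordered, labeled-ordered, and unlabeled-ordered models. Verifying this invariance---via the $n!$ labelings of each ordered shape and the $2^{n-1}$ orderings of each labeled unordered shape---is what legitimizes the transfer of both the mean and the limiting distribution from the Catalan case.
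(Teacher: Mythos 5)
Your proof is correct and takes essentially the same route as the paper: both rest on the equivalence of the uniform labeled-unordered, labeled-ordered, and unlabeled-ordered models, combined with the machinery of Theorem~\ref{uniformtree}(i). The only cosmetic difference is that the paper transfers the distribution of the CP rank itself (which, like the height, depends only on the unlabeled unordered shape), so the theorem becomes literally equivalent to Theorem~\ref{uniformtree}(i), whereas you transfer only the height distribution and then re-run the Corollary~\ref{cor:CP_vs_height}--Slutsky argument; both are valid.
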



Aldous's beta-splitting model for random binary trees has a shape parameter $\beta \in [-2, \infty]$. The special case $\beta = -\frac32$ corresponds to the uniform model of Theorem~\ref{cladogram}. Generally, in \cite[Proposition 4]{Aldous1996}, Aldous proved the following results on the height $H_n$:
\begin{itemize}
\item For $\beta > -1$, the ratio $H_n / \log n$ tends in probability and in expectation to a constant $g(\beta)$. There is no explicit expression for this constant in general, but numerical values can be determined from an implicit equation given in \cite[Proposition 4]{Aldous1996}.

To mention some examples, $g(\infty) = 1/\log 2 \approx 1.44270$, and we obtain $g(1) \approx 3.19258$,
$g(0) \approx 4.31107$, and $g(-\frac{1}{2}) \approx 6.38090$ from the implicit equation (note that \cite{Aldous1996} only gives two digits each). 

The case $\beta=0$ corresponds to the Yule model (see Section 
\ref{sec:Yule} below for more information). For $\beta = \infty$, all internal nodes split their subtrees (almost) precisely in half: the difference of the subtree sizes is at most $1$.
\item For $\beta = -1$, $\mathbb{E} \{ H_n \} \ge
\big(6/\pi^2 + o(1)\big)(\log n)^2$ (Aldous's proposition did not report a result for $\mathbb{E} \{ H_n \}$ with $\beta=-1$, but this inequality follows quickly from Aldous's results reported in the proposition for related quantities). 
\item For $\beta \in (-2,-1)$, $n^{1+\beta} \mathbb{E} \{H_n \} \to g(\beta)$,
and $n^{1+\beta}  H_n$ has a non-degenerate limit distribution. 
\end{itemize}

These results on tree height for cladograms under the beta-splitting model directly impact the Colijn--Plazzotta rank. For example, for $\beta \in (-2,-1)$, we have
for Aldous's beta-splitting tree $\tau_n$ with $n$ leaves
$$
\mathbb{E} \{ \log_2 \log f(\tau_n) \}\sim \frac{g(\beta)}{n^{1+\beta}} .
$$

\bigskip
\section{Yule--Harding trees, random binary search trees}
\label{sec:Yule}

Among the probability distributions that could be placed on the leaf-labeled binary trees with $n$ leaves, perhaps the most frequently considered, along with the uniform distribution of Section \ref{sec:uniform}, is the $\beta=0$ case of the beta-splitting model. This model corresponds to the random binary search trees, which are identical to Yule or Yule--Harding trees in phylogenetics~\cite{Fuchs2025}, except for the convention that random binary search trees are typically indexed by the number of internal nodes and Yule--Harding trees are indexed by the number of leaves. We index trees by the number of leaves, considering random binary search trees in which all internal nodes have two children so that the total number of internal nodes is $n-1$ when the number of leaves is $n$.

To be precise, we start with a standard random binary search tree on $n-1$ (internal) nodes and attach a layer of $n$ external nodes, i.e., we give a second child to all (internal) nodes having one child, and give two children to all leaves. The random CP rank of a tree under this model is denoted by $S_n$ ($S$ for \underline{s}earch tree).

For these trees, the height $H_n$ satisfies~\cite[Theorem 5.1]{Devroye1986}
$$
\frac{H_n}{\log n} \xrightarrow{p} \alpha,
$$
where $\alpha \approx 4.31107$ is the unique solution in $(2, \infty)$ of the equation
$$
\alpha \log (2e/\alpha) = 1.
$$
Setting
$$
\beta = \frac{3 \alpha}{2 \alpha -2 } \approx 1.95303,
$$
Reed~\cite[Lemma~8 and Lemma~10]{reed2003} and Drmota~\cite[Theorem~2.5]{drmota2003} showed that 
$H_n - \alpha \log n + \beta \log \log n$ is tight, i.e., 
\begin{equation}\label{eq:tight}
\limsup_{x \uparrow \infty} \, \big[\sup_n \mathbb{P} \left\{ | H_n - \alpha \log n + \beta \log \log n | \ge x \right\}\big] = 0.
\end{equation}
Indeed, Reed~\cite[Lemma~8]{reed2003} states that there exists a universal constant $C_2$ such that for $i >0$ with
$i = o(\log n)$,
$$
\mathbb{P} \{ H_n \geq \alpha \log n - \beta \log \log n + i \} < C_2 \cdot 2^{-i/2}.
$$
In the other direction, Reed~\cite[Lemma~10]{reed2003} states that there exist universal constants $C_1,C_3$ and $d < 1$ such that for $i >0$ with
$i = o(\sqrt{\log n})$,
$$
\mathbb{P} \{ H_n \leq \alpha \log n - \beta \log \log n - C_3 - i \} < C_1 \cdot d^{i}.
$$
Combining the two directions, we find that, for $x \geq C_3$,
$$\mathbb{P} \left\{ | H_n - \alpha \log n + \beta \log \log n | \ge x \right\} \leq C_2 \cdot 2^{-x/2} + C_1 \cdot d^{x-C_3}$$
holds for all $n$. The bound on the right side is independent of $n$ and goes to $0$ as $x \uparrow \infty$, so~\eqref{eq:tight} follows.


\begin{theorem}\label{rbst}
Let $\tau_n$ be a random leaf-labeled binary tree with $n$ leaves following the Yule--Harding distribution, with CP rank $S_n=f(\tau_n)$. Then
\[\mathbb{E} \{\log_2 \log S_n \} \sim \alpha \log n,\]
and 
$$
\frac{\log_2 \log S_n}{\log n} \overset{p}{\to} \alpha.
$$
\end{theorem}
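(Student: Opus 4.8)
The plan is to follow exactly the same strategy used in the proofs of Theorem~\ref{uniformtree} and Theorem~\ref{cladogram}: transfer the known asymptotics of the tree height $H_n$ to the transformed CP rank $\log_2 \log S_n$ via Corollary~\ref{cor:CP_vs_height}. The key input is that for a tree of height $h$ we have $\log_2 \log f(t) = h + O(1)$, where the implied constant is \emph{absolute} (it does not depend on $n$ or on the particular tree). Thus the random variable $\log_2 \log S_n - H_n$ is bounded by a deterministic constant, and the whole task reduces to invoking the appropriate probabilistic results on the height of random binary search trees, which are already quoted in the excerpt.

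First I would establish the convergence in probability. From Devroye~\cite[Theorem 5.1]{Devroye1986} we have $H_n / \log n \xrightarrow{p} \alpha$. Since $\log_2 \log S_n = H_n + O(1)$ with an absolute constant, dividing by $\log n$ gives
\[
\frac{\log_2 \log S_n}{\log n} = \frac{H_n}{\log n} + \frac{O(1)}{\log n},
\]
where the second term tends to $0$ deterministically. Hence $\log_2 \log S_n / \log n \xrightarrow{p} \alpha$ as well, which is the second claim.

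Next I would handle the statement on the expected value, $\mathbb{E}\{\log_2 \log S_n\} \sim \alpha \log n$. By linearity of expectation and the bound $|\log_2 \log S_n - H_n| \le M$ for an absolute constant $M$, it suffices to show $\mathbb{E}\{H_n\} \sim \alpha \log n$. Here I would exploit the tightness statement~\eqref{eq:tight}, or more precisely the explicit tail bounds from Reed~\cite{reed2003} reproduced in the excerpt. These give $H_n = \alpha \log n - \beta \log \log n + R_n$ where $R_n$ is tight, and the tails $\mathbb{P}\{H_n \ge \alpha \log n - \beta \log\log n + i\} < C_2\, 2^{-i/2}$ and $\mathbb{P}\{H_n \le \alpha \log n - \beta \log\log n - C_3 - i\} < C_1\, d^{\,i}$ decay geometrically in $i$ uniformly in $n$. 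Integrating these tails shows that $\mathbb{E}\{R_n\}$ stays bounded (indeed $R_n$ is uniformly integrable because its tails are dominated by an integrable envelope), so that $\mathbb{E}\{H_n\} = \alpha \log n - \beta \log \log n + O(1) \sim \alpha \log n$.

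The main obstacle, and the only step requiring genuine care, is precisely this passage from tightness (convergence in distribution / in probability) to convergence of the \emph{expectation}: tightness alone does not guarantee that $\mathbb{E}\{R_n\}$ converges or even stays bounded. What rescues the argument is that the one-sided tail bounds of Reed are uniform in $n$ and summable, giving uniform integrability of the family $\{R_n\}$; I would spell this out by writing $\mathbb{E}\{|R_n|\} \le \sum_{i\ge 0} \mathbb{P}\{|R_n| > i\}$ and bounding the right-hand side by a convergent series independent of $n$. Everything else—linearity of expectation, the absolute $O(1)$ from Corollary~\ref{cor:CP_vs_height}, and Slutsky-type reasoning for the convergence in probability—is routine and parallels the earlier proofs verbatim.
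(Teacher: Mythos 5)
Your treatment of the convergence in probability is correct and is exactly the paper's argument: Corollary~\ref{cor:CP_vs_height} gives $|\log_2\log S_n - H_n|=O(1)$ with an absolute constant, and Slutsky's theorem transfers $H_n/\log n \overset{p}{\to} \alpha$ to $\log_2\log S_n/\log n$. The divergence is in the expectation statement. The paper does not re-derive the expected height: as in Theorem~\ref{uniformtree}, it combines the bounded difference with linearity of expectation and the known asymptotics $\mathbb{E}\{H_n\}\sim\alpha\log n$ for random binary search trees. You instead try to recover $\mathbb{E}\{H_n\}\sim\alpha\log n$ from Reed's tail estimates via uniform integrability, and that step, as written, has a genuine gap.

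The gap is that the quoted bounds of Reed carry range restrictions: the upper-tail bound $\mathbb{P}\{H_n \ge \alpha\log n - \beta\log\log n + i\} < C_2\,2^{-i/2}$ is stated only for $i = o(\log n)$, and the lower-tail bound only for $i = o(\sqrt{\log n})$. These restrictions are harmless for tightness, which for each \emph{fixed} $x$ needs a bound uniform in $n$ (this is exactly how the paper derives~\eqref{eq:tight}). But your inequality $\mathbb{E}\{|R_n|\}\le\sum_{i\ge 0}\mathbb{P}\{|R_n|>i\}$ requires, for each fixed $n$, control of $i$ all the way up to the deterministic ceiling $H_n\le n-1$, i.e.\ $i$ of order $n$, far outside the admissible window; so the quoted lemmas do not furnish a summable envelope independent of $n$. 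Concretely, cutting the sum at any admissible $\omega_n=o(\log n)$ and using $H_n\le n-1$ on the remainder leaves a term of size roughly $n\,C_2\,2^{-\omega_n/2}\ge n^{1-o(1)}$, which diverges (the lower tail is less problematic, since $H_n\ge\log_2 n$ forces $R_n\ge -O(\log n)$, but the upper tail genuinely fails). To close the gap, either cite the expected-height asymptotics directly---Devroye's result covers $\mathbb{E}\{H_n\}/\log n\to\alpha$, and Reed's main theorem gives the sharper $\mathbb{E}\{H_n\}=\alpha\log n-\beta\log\log n+O(1)$---which is in effect what the paper does, or prove an upper-tail bound valid for \emph{all} $i$: Markov's inequality applied to the expected number of external nodes at depth at least $k$ yields $\mathbb{P}\{H_n\ge\alpha\log n+j\}\le e^{\alpha-1}(\alpha/2)^{-j}$ for all $j\ge 0$ and all $n$, the exponent of $n$ vanishing precisely because $\alpha\log(2e/\alpha)=1$; this gives $\mathbb{E}\{H_n\}\le\alpha\log n+O(1)$, and the matching lower bound $\mathbb{E}\{H_n\}\ge(\alpha-\epsilon)(1-o(1))\log n$ already follows from the in-probability convergence you established.
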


\begin{proof}
The proof is similar to Theorem~\ref{uniformtree}.
By Corollary~\ref{cor:CP_vs_height}, the difference between $\log_2 \log S_n$ and the height $H_n$ is bounded, so 
$$
\frac{\log_2 \log S_n - H_n}{\log n}
$$
goes to $0$ (almost surely, thus also in probability). 
So the result follows immediately via Slutsky's theorem from the fact that $H_n / \log n \overset{p}{\to} \alpha$.
\end{proof}

\begin{theorem}\label{rbst2}
Let $\tau_n$ be a random leaf-labeled binary tree with $n$ leaves following the Yule--Harding distribution, with CP rank $S_n=f(\tau_n)$. Then
$$
\frac {(\log n)^{\beta \log 2}  \log S_n} {n^{\alpha \log 2}}
$$
is a tight sequence of random variables.
\end{theorem}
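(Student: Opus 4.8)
The plan is to reduce the statement to the tightness estimate~\eqref{eq:tight} for the height by exponentiating the additive comparison of Corollary~\ref{cor:CP_vs_height}. The whole point is that~\eqref{eq:tight} centers $H_n$ not only at the leading term $\alpha \log n$ but also at the secondary term $\beta \log \log n$, and these two terms are exactly what the factors $n^{\alpha \log 2}$ and $(\log n)^{\beta \log 2}$ in the statement are designed to cancel once one passes from $H_n$ to $\log S_n \approx 2^{H_n}$.

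First I would turn the additive estimate into a multiplicative one. Corollary~\ref{cor:CP_vs_height} supplies a universal constant $C$, depending neither on the tree nor on $n$, with $|\log_2 \log S_n - H_n| \le C$; here $\log$ denotes the natural logarithm, and the constant absorbs both the shift from $\log_\gamma$ to $\log$ and the bounded gap in the corollary. Exponentiating base $2$, this is equivalent to writing
$$
\log S_n = W_n \, 2^{H_n}, \qquad W_n = 2^{\,\log_2 \log S_n - H_n},
$$
where the random factor $W_n$ obeys the \emph{deterministic} two-sided bound $2^{-C} \le W_n \le 2^{C}$. Thus, up to a factor bounded between two fixed positive constants, $\log S_n$ behaves like $2^{H_n}$.

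Next I would substitute this into the target ratio and take logarithms, which converts all three powers into the height's centering. Writing $T_n$ for the ratio in the statement,
$$
\log T_n = \log\!\left( \frac{(\log n)^{\beta\log 2}\,\log S_n}{n^{\alpha\log 2}} \right) = \log W_n + (\log 2)\big(H_n - \alpha\log n + \beta\log\log n\big).
$$
The first summand satisfies $|\log W_n| \le C \log 2$, and the bracketed quantity is precisely $D_n := H_n - \alpha\log n + \beta\log\log n$, which is tight by~\eqref{eq:tight}. Since a positive constant multiple of a tight sequence is tight, and the sum of a deterministically bounded sequence and a tight sequence is tight, it follows that $\log T_n$ is tight. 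Finally, because $x \mapsto e^x$ carries bounded sets to bounded sets and $T_n > 0$, tightness of $\log T_n$ transfers to $T_n$: given $\varepsilon > 0$, choose $x$ with $\sup_n \mathbb{P}\{|\log T_n| \ge x\} < \varepsilon$, whence $\sup_n \mathbb{P}\{T_n \ge e^x\} < \varepsilon$. This proves that $T_n$ is a tight sequence.

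The argument is short once Corollary~\ref{cor:CP_vs_height} and~\eqref{eq:tight} are available, so I do not expect a genuine obstacle; the one step deserving care is the very first one. I must check that the $O(1)$ discrepancy between $\log_2 \log S_n$ and $H_n$ is \emph{uniform in $n$}, which it is, since it descends from the deterministic asymptotics $c_k \sim 2\gamma^{2^k}$ and hence from a bound valid for every individual tree. This uniformity is exactly what guarantees that exponentiation yields a factor $W_n$ trapped between two fixed constants rather than one that could drift with $n$ and spoil the delicate $(\log n)^{\beta\log 2}$ correction; a merely $n$-dependent or in-probability comparison would be too weak to preserve the secondary centering term.
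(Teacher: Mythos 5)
Your proof is correct and takes essentially the same approach as the paper: both reduce the claim to the tightness estimate \eqref{eq:tight} via the uniform $O(1)$ comparison of Corollary~\ref{cor:CP_vs_height}, which exponentiates to a bounded multiplicative factor that the centering terms $n^{\alpha\log 2}$ and $(\log n)^{\beta\log 2}$ are designed to absorb. The only cosmetic difference is that the paper uses just the one-sided bound $\log S_n \le K\cdot 2^{H_n}$ and bounds the upper tail of the ratio directly (sufficient since the ratio is nonnegative), whereas you prove the marginally stronger two-sided statement that the logarithm of the ratio is tight.
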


\begin{proof}
By Corollary~\ref{cor:CP_vs_height}, there exists an absolute positive constant $K$ such that $K\cdot 2^{H_n} \geq \log S_n$. Thus,
$$\frac{(\log n)^{\beta \log 2}  \log S_n}{n^{\alpha \log 2}} \geq x$$
implies 
$$2^{H_n} \geq \frac{x n^{\alpha \log 2}}{K (\log n)^{\beta \log 2}},$$
or
$$H_n - \alpha \log n + \beta \log \log n \geq \frac{\log(x/K)}{\log 2}.$$
This means that
\begin{align*}
\mathbb{P} \Big\{ \Big| \frac{(\log n)^{\beta \log 2}  \log S_n} {n^{\alpha \log 2}} \Big| \geq x \Big\}
&= \mathbb{P} \Big\{ \frac{(\log n)^{\beta \log 2}  \log S_n} {n^{\alpha \log 2}} \geq x \Big\} \\
&\leq \mathbb{P} \Big\{ H_n - \alpha \log n + \beta \log \log n \geq \frac{\log(x/K)}{\log 2} \Big\} \\
&\leq \mathbb{P} \Big\{ |H_n - \alpha \log n + \beta \log \log n| \geq \frac{\log(x/K)}{\log 2} \Big\}.
\end{align*}
By~\eqref{eq:tight}, this expression goes to $0$ if we take $\sup_n$ and then $\limsup_{x \uparrow \infty}$, showing that the sequence is indeed tight.
\end{proof}

\bigskip
\section
{Mean and variance of the Colijn--Plazzotta rank}

Sections \ref{sec:unlabeled}--\ref{sec:Yule} focus on properties of the distribution of $\log \log f(\tau_n)$ under various models of randomness; in this section, we focus on the distribution of the random CP rank $f(\tau_n)$ itself. In particular, we study the first-order asymptotics of the mean and variance of the Colijn--Plazzotta rank under the models of randomness from Sections \ref{sec:unlabeled}--\ref{sec:Yule}, investigating $C_n$, $O_n$, $L_n$, and $S_n$. As pointed out in Section~\ref{sec:uniform}, the models of uniformly random unlabeled ordered binary trees (Catalan trees) and uniformly random labeled unordered binary trees are equivalent for our purposes, so that the distributions of $C_n$ and $L_n$ are the same.

We give a general theorem on the mean and variance of the Colijn--Plazzotta rank applicable to all random tree models specifying a certain condition. We then obtain first-order asymptotics for the means and variances of $C_n$, $O_n$, $L_n$ and $S_n$ as simple corollaries. The desired means and variances are determined mainly by the extreme cases for Colijn--Plazzotta ranks. 


\begin{lemma}\label{maximum}
(i) Among all unlabeled binary rooted trees with $n$ leaves, $n \geq 1$, the Colijn--Plazzotta rank is maximized by the caterpillar.
(ii) Among all unlabeled binary rooted trees with $n$ leaves and height $n-2$ or less, $n \geq 4$, the Colijn--Plazzotta rank is maximized by the pseudocaterpillar.
\end{lemma}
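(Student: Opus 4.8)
The plan is to lean on Lemma~\ref{rank-height}, which confines the CP rank of a height-$h$ tree to the window $[c_h, c_{h+1})$, so that rank is governed first and foremost by height. Throughout I would write $\phi(x) = x(x-1)/2 + 2$, so that $c_{k+1} = \phi(c_k)$ and $\phi$ is strictly increasing for $x \geq 1$; all ranks appearing below are at least $1$, so $\phi$ and its iterates preserve strict inequalities.

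For part (i), I would first note that every binary tree with $n$ leaves has height at most $n-1$, and that height exactly $n-1$ forces the caterpillar: a root-to-leaf path of length $n-1$ passes through $n-1$ internal nodes, which is the total number of internal nodes, so all of them lie on one path and the tree is the caterpillar (of rank $c_{n-1}$). Every other tree has height at most $n-2$, whence Lemma~\ref{rank-height} gives $f(t) < c_{h+1} \leq c_{n-1}$. Thus the caterpillar is the strict maximizer.

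For part (ii), the window bound again does the first reduction: any tree of height at most $n-3$ has rank below $c_{n-2}$, which (via $R_0 = c_{n-2}+1$ computed below) sits strictly below the pseudocaterpillar's rank, so it suffices to maximize over trees of height exactly $n-2$. The key structural step is to classify these as a one-parameter family. Fixing a longest path $u_0, \dots, u_{n-2}$ (with $u_{n-2}$ a leaf), its $n-2$ internal nodes $u_0, \dots, u_{n-3}$ account for all but one of the $n-1$ internal nodes; the one remaining internal node cannot have an internal child, so it is a cherry hanging off some $u_i$, and requiring that no path exceed length $n-2$ forces $i \in \{0, 1, \dots, n-4\}$. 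Each $i$ gives a distinct tree: a caterpillar spine carrying a single cherry at depth $i+1$, with the pseudocaterpillar being exactly $i = n-4$.

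It then remains to compute the rank $R_i$ of the tree with the cherry at position $i$ and to show it increases in $i$. Reading the recursion bottom-up, the subtree below $u_i$ is a caterpillar with $n-2-i$ leaves (rank $c_{n-3-i}$); combining it with the cherry at $u_i$ yields a subtree of rank $c_{n-2-i}+1$, and the $i$ spine nodes above each append a leaf, contributing $i$ further applications of $\phi$, so $R_i = \phi^{(i)}(c_{n-2-i}+1)$. Comparing with $R_{i-1} = \phi^{(i-1)}(c_{n-1-i}+1)$, I would peel off one $\phi$ and use $c_{n-1-i} = \phi(c_{n-2-i})$ to get $\phi(c_{n-2-i}+1) - (c_{n-1-i}+1) = c_{n-2-i} - 1 > 0$ for $i \leq n-4$; applying the increasing map $\phi^{(i-1)}$ gives $R_i > R_{i-1}$, so $R_i$ is strictly increasing and maximized at $i = n-4$, the pseudocaterpillar. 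The main obstacle I anticipate is the structural classification: carefully arguing that every height-$(n-2)$ tree is a caterpillar spine with exactly one appended cherry and pinning down the admissible range of $i$ (notably that $i = n-3$ would raise the height back to $n-1$). Once the family is described, the rank formula and the one-step estimate $\phi(c_{n-2-i}+1) - (c_{n-1-i}+1) = c_{n-2-i}-1$ are routine algebra with $c_{k+1} = \phi(c_k)$.
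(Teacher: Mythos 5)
Your proposal is correct, but it proves the lemma by a genuinely different route than the paper. For part (i), the paper simply cites Corollary~20 of Rosenberg (2021), whereas you derive it directly from Lemma~\ref{rank-height} plus the observation that height $n-1$ forces all $n-1$ internal nodes onto one root-to-leaf path; this makes the statement self-contained. For part (ii), the paper argues by induction on $n$: among trees with $n$ leaves and height at most $n-2$, the rank is maximized by making the left subtree's rank as large as possible, and that left subtree (having at most $n-1$ leaves and height at most $n-3$) is, by the inductive hypothesis, the pseudocaterpillar with $n-1$ leaves, so the maximizer is the pseudocaterpillar with $n$ leaves. Your argument instead avoids induction on the lemma itself: you classify all $n$-leaf trees of height exactly $n-2$ as a one-parameter family (a caterpillar spine with a single cherry attached at depth $i+1$, $0 \le i \le n-4$), compute the rank $R_i = \phi^{(i)}(c_{n-2-i}+1)$ explicitly via the iterated map $\phi(x)=\binom{x}{2}+2$, and show $R_i$ is strictly increasing in $i$ by the one-step estimate $\phi(c_{n-2-i}+1)-(c_{n-1-i}+1)=c_{n-2-i}-1>0$, so the maximum sits at $i=n-4$, the pseudocaterpillar; the height-$\le n-3$ trees are then disposed of by the window bound since $R_0 = c_{n-2}+1 > c_{n-2}$. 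The trade-off: the paper's induction is shorter and exploits the CP recursion directly, though its key step (``maximize the left subtree'') tacitly requires a small estimate to rule out trading left-subtree rank for right-subtree rank, namely that $f(\ell) \le d_{n-3}-1$ forces $f(t) \le \binom{d_{n-3}}{2}+1 < d_{n-2}$; your classification costs a structural argument (which you carry out correctly, including the exclusion of $i=n-3$) but yields strictly more information --- the complete rank ordering of all height-$(n-2)$ trees, with rank increasing as the cherry slides down the spine.
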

\begin{proof}
    (i) This result was proven in Corollary 20 of \cite{Rosenberg2021}. (ii) This result follows by induction and Lemma \ref{rank-height}. For $n=4$, the pseudocaterpillar is the only tree with height at most $n-2=2$. Suppose for induction that for all $k$, $4 \leq k \leq n-1$, the pseudocaterpillar has the maximal Colijn--Plazzotta rank among trees with $k$ leaves and height $k-2$. 
    
    Among trees $t$ with $n$ leaves and height at most $n-2$, by definition of the Colijn--Plazzotta rank, the rank $f(t)$ is maximized by choosing its left subtree $\ell(t)$ to have $f\big(\ell(t)\big)$ as large as possible. The left subtree $\ell(t)$ has at most $n-1$ leaves and height at most $n-3$, so that the inductive hypothesis applies: $\ell(t)$ is the pseudocaterpillar with $n-1$ leaves, the right subtree $r(t)$ is  a single leaf, and $t$ is the pseudocaterpillar with $n$ leaves.
\end{proof}
For the following theorem, we recall Rosenberg's~\cite{Rosenberg2021} sequence for the maximal Colijn--Plazzotta rank $c_h$ among trees with height $h \geq 0$ and $h+1$ leaves: $c_0 = 1$, and 
\begin{equation}
\label{eq:recursion-c}
c_{h+1} = \binom{c_h}{2} + 2, \, h \ge 0.
\end{equation}
Equivalently, $c_h$ is the Colijn--Plazzotta rank of a caterpillar
of height $h$. Recall that $c_2 = 3$, $c_3 = 5$, $c_4 = 12$, and $c_5 = 68$.

We also let $d_h$ be the corresponding rank of a pseudocaterpillar of height $h$. Then $d_2=4$, and 
\begin{equation}
\label{eq:recursion-d}
d_{h+1} = \binom{d_h}{2} + 2, \, h \ge 2.
\end{equation}
The sequences $c_h$ and $d_h$ obey identical recursions, only with different starting points. Sequence $d_h$ begins with $d_2=4$, $d_3=8$, $d_4=30$, and $d_5=437$.

\begin{theorem}\label{mean+variance}
For a given probability model for random binary rooted trees $T_n$ with $n$ leaves, let
$$
\pi_n = \mathbb {P} \left\{ T_n ~\textrm{is a caterpillar} \right\},
$$
and let $P_n$ be the Colijn--Plazzotta rank of $T_n$.
If $\pi_n = o(1)$ and
\begin{align}\label{sufficient}
\log (1/\pi_n) = o(2^n),
\end{align}
then
\begin{align*}
\mathbb {E} \left\{ P_n \right\} & \sim \pi_n c_{n-1}, \\
\V \left\{ P_n \right\}  \sim 
\mathbb {E} \left\{ P_n^2 \right\} & \sim 
\pi_n c_{n-1}^2.
\end{align*}
\end{theorem}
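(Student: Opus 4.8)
The plan is to isolate the contribution of the caterpillar, which is the unique $n$-leaf tree realizing the maximal rank $c_{n-1}$, and to show that everything else is negligible. Writing $A$ for the event that $T_n$ is a caterpillar, I would split $\mathbb{E}\{P_n\} = \pi_n c_{n-1} + \mathbb{E}\{P_n \mathbf{1}_{A^c}\}$, using that $P_n = c_{n-1}$ on $A$. Since the caterpillar is the only $n$-leaf tree of height $n-1$, on $A^c$ the tree has height at most $n-2$, so Lemma~\ref{maximum}(ii) forces $P_n \le d_{n-2}$ there, whence $0 \le \mathbb{E}\{P_n \mathbf{1}_{A^c}\} \le d_{n-2}$. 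The entire argument then reduces to showing $d_{n-2} = o(\pi_n c_{n-1})$. The second moment is handled identically: $\mathbb{E}\{P_n^2\} = \pi_n c_{n-1}^2 + \mathbb{E}\{P_n^2 \mathbf{1}_{A^c}\}$ with $0 \le \mathbb{E}\{P_n^2 \mathbf{1}_{A^c}\} \le d_{n-2}^2$, reducing to $d_{n-2}^2 = o(\pi_n c_{n-1}^2)$. The variance then follows because $(\mathbb{E}\{P_n\})^2 \sim \pi_n^2 c_{n-1}^2 = \pi_n \cdot \pi_n c_{n-1}^2 = o(\pi_n c_{n-1}^2)$, where $\pi_n = o(1)$ is exactly what is needed, so that $\V\{P_n\} \sim \mathbb{E}\{P_n^2\} \sim \pi_n c_{n-1}^2$.

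The heart of the proof is the comparison of the two doubly-exponential sequences $c_h$ and $d_h$, carried out on the logarithmic scale. Both obey the recursion $a_{h+1} = \binom{a_h}{2}+2$, so each grows like $2g^{2^h}$ for a suitable base ($\gamma \approx 1.11625$ for $c_h$, and some $\delta$ for $d_h$), giving $\log c_{n-1} = \Theta(2^n)$ and $\log d_{n-2} = \Theta(2^n)$. What is needed is that the $d$-exponent is strictly smaller, i.e.\ that $\log c_{n-1} - \log d_{n-2} \ge c\,2^n$ for some constant $c > 0$ and all large $n$ (equivalently $\delta < \gamma^2$). Granting this, taking logarithms gives $\log\bigl(d_{n-2}/(\pi_n c_{n-1})\bigr) = \log d_{n-2} - \log c_{n-1} + \log(1/\pi_n) \le -c\,2^n + o(2^n) \to -\infty$, where the hypothesis $\log(1/\pi_n) = o(2^n)$ is precisely what makes the factor $1/\pi_n$ harmless; the same estimate with a factor $2$ in front of the gap settles $d_{n-2}^2 = o(\pi_n c_{n-1}^2)$.

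The main obstacle, and the one place the argument must be done with care, is establishing the strict separation $\log c_{n-1} - \log d_{n-2} = \Theta(2^n)$: the mere inequality $d_{n-2} < c_{n-1}$ coming from Lemma~\ref{rank-height} is not enough, because the error term must beat the vanishing factor $\pi_n$. I would make the separation self-contained by tracking the ratio $R_h = c_{h+1}/d_h$. Using the two recursions together with $c_{h+1}, d_h \ge 2$, one obtains $R_{h+1} = \bigl(\binom{c_{h+1}}{2}+2\bigr)/\bigl(\binom{d_h}{2}+2\bigr) \ge (c_{h+1}^2/4)/d_h^2 = R_h^2/4$, so that $S_h := \log_2 R_h$ satisfies $S_{h+1} - 2 \ge 2(S_h - 2)$. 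A direct computation gives $R_5 = 2280/437 > 4$, i.e.\ $S_5 > 2$, and the recursion then yields $S_h - 2 \ge 2^{h-5}(S_5 - 2)$, hence $\log R_h = S_h \log 2 \ge c'\,2^h$ for large $h$. Taking $h = n-2$ produces the required lower bound on $\log c_{n-1} - \log d_{n-2} = \log R_{n-2}$. This elementary ratio recursion is cleaner than estimating $\delta$ directly and avoids any appeal to the precise value of the growth base.
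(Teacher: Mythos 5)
Your proof is correct, and its outer structure coincides with the paper's: the same caterpillar/pseudocaterpillar decomposition (justified via Lemma~\ref{maximum}(ii), since every non-caterpillar has height at most $n-2$), giving $\pi_n c_{n-1} \le \E\{P_n\} \le \pi_n c_{n-1} + d_{n-2}$ and its second-moment analogue, the same reduction to $d_{n-2} = o(\pi_n c_{n-1})$, and the same use of $\pi_n = o(1)$ to pass from the second moment to the variance. The genuine difference is in how the doubly exponential separation between $c_{n-1}$ and $d_{n-2}$ is established. The paper fixes the constant $0.9$ and proves by induction that $d_{h-1} < 0.9^{2^{h-3}} c_h$ for $h \ge 3$, which requires two base cases ($h=3,4$) and an algebraic verification that a certain error fraction is positive. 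You instead track $R_h = c_{h+1}/d_h$ and derive the self-improving inequality $R_{h+1} \ge R_h^2/4$ directly from the common recursion \eqref{eq:recursion-c}--\eqref{eq:recursion-d}; your two auxiliary bounds $\binom{m}{2}+2 \ge m^2/4$ (always true, since $m^2-2m+8\ge 0$) and $\binom{m}{2}+2 \le m^2$ (for $m \ge 2$) do check out, so $S_h = \log_2 R_h$ satisfies $S_{h+1}-2 \ge 2(S_h-2)$, and the single numerical base case $R_5 = c_6/d_5 = 2280/437 > 4$ yields $\log(c_{h+1}/d_h) \ge c'\,2^h$. Setting $h=n-2$ and invoking \eqref{sufficient} then finishes exactly as in the paper. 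Your variant is arguably cleaner---no magic constant, one base case instead of two---while the paper's induction yields the slightly more explicit closed form $d_h/c_{h+1} < 0.9^{2^{h-2}}$; the two bounds are of equal strength, namely a gap of order $2^n$ on the logarithmic scale, which is precisely what hypothesis \eqref{sufficient} is calibrated to beat. One small remark: for the second-moment condition you double the gap, whereas the paper simply notes that $d_{n-2} < c_{n-1}$ implies $(d_{n-2}/c_{n-1})^2 < d_{n-2}/c_{n-1}$, so the squared condition follows from the first at no extra cost; both routes are valid.
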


\begin{proof}
Trivially, extracting terms in the sum $\mathbb {E} \left\{ P_n \right\}$ corresponding to the events that $T_n$ is a caterpillar of height $n-1$ or a pseudocaterpillar of height $n-2$,
\begin{align}
\label{eq:EPn}
\pi_n c_{n-1} \le \mathbb {E} \left\{ P_n \right\} & \le \pi_n c_{n-1} + d_{n-2}, \\ 
\label{eq:EPn2}
\pi_n c_{n-1}^2 \le \mathbb {E} \left\{ P_n^2 \right\} & \le \pi_n c_{n-1}^2 + d_{n-2}^2.
\end{align}
By taking the ratio of \eqref{eq:EPn} with $\pi_n c_{n-1}$, to verify $\mathbb {E} \left\{ P_n \right\} \sim \pi_n c_{n-1}$, it suffices to show
\begin{align}\label{condition2}
\lim_{n \to \infty} \frac{ d_{n-2}} {\pi_n c_{n-1}} = 0.
\end{align}
Similarly, because $d_{n-2} < c_{n-1}$ so that $(d_{n-2} / c_{n-1})^2 < d_{n-2} / c_{n-1}$, by taking the ratio of  \eqref{eq:EPn2} and $\pi_n c_{n-1}^2$, verifying condition \eqref{condition2} suffices for verifying
$\V \left\{ P_n \right\}  \sim 
\mathbb {E} \left\{ P_n^2 \right\} \sim 
\pi_n c_{n-1}^2$; we see first that $\mathbb{E} \left\{ P_n^2 \right\} \sim \pi_n c_{n-1}^2$, and then $\V \left\{ P_n \right\} =\mathbb {E} \left\{ P_n^2 \right\} - \mathbb {E} \left\{ P_n \right\}^2 \sim 
\mathbb {E} \left\{ P_n^2 \right\}$ follows by recalling that $\pi_n = o(1)$.

We will show that (\ref{sufficient}) implies (\ref{condition2}). We first prove by induction that $d_{h-1} < 0.9^{2^{h-3}} c_h$ for all $h \geq 3$. This statement is readily verified for $h = 3$ and $h=4$. Now assume that the inequality holds for some positive integer $h \geq 4$, and write $Q_h = 0.9^{-(2^{h-3})} > 1$, so that $c_h > Q_h d_{h-1}$. It follows from the recursions \eqref{eq:recursion-c} and \eqref{eq:recursion-d} that
\begin{align*}
\frac{d_h}{c_{h+1}} = \frac{d_{h-1}^2 - d_{h-1} + 4}{c_h^2 - c_h + 4} &< \frac{d_{h-1}^2 - d_{h-1} + 4}{Q_h^2 d_{h-1}^2 - Q_h d_{h-1} + 4} \\
&= \frac{1}{Q_h^2} - \frac{(Q_h-1)(Q_h d_{h-1}-4Q_h-4)}{Q_h^2(Q_h^2 d_{h-1}^2 - Q_h d_{h-1} + 4)}.    
\end{align*}
The final fraction is positive since $Q_h > 1$ and $d_{h-1} \geq d_3 \geq 8$. Thus,
$$\frac{d_h}{c_{h+1}} < \frac{1}{Q_h^2} = 0.9^{2^{h-2}},$$
completing the induction.

It follows (for $n \geq 4$) that 
$$\log \frac{d_{n-2}}{\pi_n c_{n-1}} \leq \log \Big( \frac{1}{\pi_n} 0.9^{2^{n-4}}\Big) = 2^{n-4} \log 0.9 - \log \pi_n = 2^{n-4} \log 0.9 + o(2^n)$$
by the assumption~\eqref{sufficient}. Because this last expression goes to $-\infty$ as $n$ increases without bound, we have verified~\eqref{condition2}. This completes the proof.
\end{proof}

The theorem finds that the expectation and variance of the CP rank are determined asymptotically by the CP rank of the caterpillar; the asymptotic mean is simply the product of the CP rank of the caterpillar and the probability that a tree is a caterpillar. In all four types of random trees that we consider, we verify that $\pi_n$ satisfies \eqref{sufficient}, so that the theorem applies. This verification amounts to demonstrating that caterpillars are sufficiently probable as $n$ grows large; if $\pi_n$ were to decrease too quickly, then the condition would not be satisfied. 

The number of caterpillar cladograms is $n!/2$, so that 
for a random cladogram (and equivalently, for a random Catalan tree), \eqref{eq:labeled} gives
\begin{equation}
\label{eq:piCatalan}
\pi_n = \frac{n!}{2} \frac{1}{(2n-3)!!} = \frac{2^{n-2}}{\frac{1}n \binom{2n-2}{n-1}}
\sim \frac{2^{n-2}} {\pi^{-1/2}n^{-3/2}4^{n-1}}
\sim \frac{n^{3/2} \sqrt{\pi}}{2^n}.
\end{equation}
For a random Otter tree on $n$ leaves, we have no simple explicit expression for $\pi_n$. However, we have the asymptotic probability from (\ref{ottercount}) that a random Otter tree is the unique caterpillar:
\begin{equation}
\label{eq:piOtter}
\pi_n
\sim \frac{n^{3/2} \rho^{n}}{\lambda}.
\end{equation}
Finally, for a random binary search tree~\cite[p.~92]{Slowinski1990},
\begin{equation}
\label{eq:pirbst}
\pi_n = \frac{n!}{2} \frac{1}{\frac{n! \, (n-1)!}{2^{n-1}}} = \frac{2^{n-2}}{(n-1)!} \sim \bigg( \frac{2e}{n} \bigg)^n \frac{\sqrt{n}}{4\sqrt{2\pi}}.
\end{equation}

Verifying in \eqref{eq:piCatalan}, \eqref{eq:piOtter}, and \eqref{eq:pirbst} that condition \eqref{sufficient} is satisfied, we have shown the following theorem.
\begin{theorem}\label{mean+variance2}
With $\pi_n$ as in \eqref{eq:piCatalan}, \eqref{eq:piOtter}, and  \eqref{eq:pirbst}, and with $P_n$ corresponding to either
 $C_n$ (the random Catalan tree), $O_n$ (the random Otter tree), 
$L_n$ (the random cladogram), or $S_n$ (the random binary search tree), we have
\begin{align*}
\mathbb {E} \left\{ P_n \right\} & \sim \pi_n c_{n-1}, \\
\V \left\{ P_n \right\} \sim 
\mathbb {E} \left\{ P_n^2 \right\} & \sim 
\pi_n c_{n-1}^2.
\end{align*}
\end{theorem}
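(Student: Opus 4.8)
The plan is to obtain Theorem~\ref{mean+variance2} as an immediate consequence of Theorem~\ref{mean+variance}. That general theorem already yields the desired asymptotics $\E\{P_n\} \sim \pi_n c_{n-1}$ and $\V\{P_n\} \sim \E\{P_n^2\} \sim \pi_n c_{n-1}^2$ for any tree model satisfying $\pi_n = o(1)$ together with condition~\eqref{sufficient}, namely $\log(1/\pi_n) = o(2^n)$. Hence the entire task reduces to checking these two hypotheses for each of the probability models under consideration. I would first record that the Catalan tree model and the cladogram model induce identical distributions (as noted in Section~\ref{sec:uniform}), so $C_n$ and $L_n$ share the same $\pi_n$ from~\eqref{eq:piCatalan}; this collapses the verification to exactly three cases, one each for~\eqref{eq:piCatalan}, \eqref{eq:piOtter}, and~\eqref{eq:pirbst}.

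The verifications are then elementary asymptotic estimates read off from the three displayed formulas. For the Catalan/cladogram model, \eqref{eq:piCatalan} gives $\pi_n \sim n^{3/2}\sqrt{\pi}\,2^{-n} \to 0$, so that $\log(1/\pi_n) \sim n \log 2$. For the Otter model, \eqref{eq:piOtter} gives $\pi_n \sim n^{3/2}\rho^n/\lambda$ with $\rho < 1$, whence $\pi_n \to 0$ and $\log(1/\pi_n) \sim n \log(1/\rho)$. For the binary search tree model, \eqref{eq:pirbst} gives $\pi_n = 2^{n-2}/(n-1)!$, which tends to $0$ because the factorial dominates the exponential, and Stirling's formula yields $\log(1/\pi_n) \sim n \log n$. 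In each of the three cases $\log(1/\pi_n)$ grows at most like $n \log n$, which is $o(2^n)$, so condition~\eqref{sufficient} holds and both hypotheses of Theorem~\ref{mean+variance} are satisfied; the conclusion then transfers verbatim to $C_n$, $O_n$, $L_n$, and $S_n$.

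I do not anticipate a genuine obstacle, since the argument is purely a matter of plugging the known asymptotics of $\pi_n$ into the ready-made criterion of Theorem~\ref{mean+variance}. The only point deserving emphasis is that the binary search tree model produces the most rapidly decaying $\pi_n$ of the three---super-exponentially small---and correspondingly the largest growth rate $\log(1/\pi_n) \sim n\log n$; this is the case that presses hardest against~\eqref{sufficient}, yet it still sits comfortably below the doubly-exponential threshold $2^n$, so the condition holds with ample room to spare. With the hypotheses confirmed in all three models, the theorem follows.
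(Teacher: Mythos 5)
Your proposal is correct and matches the paper's own argument: the paper likewise proves Theorem~\ref{mean+variance2} by simply verifying that each of the three expressions \eqref{eq:piCatalan}, \eqref{eq:piOtter}, \eqref{eq:pirbst} satisfies $\pi_n = o(1)$ and condition~\eqref{sufficient}, then invoking Theorem~\ref{mean+variance} (with the Catalan and cladogram cases identified, exactly as you note). Your explicit estimates $\log(1/\pi_n) \sim n\log 2$, $n\log(1/\rho)$, and $n\log n$ are the details the paper leaves implicit, and they are all correct.
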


\bigskip
\section{Numerical computations}

We informally examine the extent to which the asymptotic approximations for $\mathbb{E}\{\log_2 \log f(\tau_n)\}$, $\mathbb{E}\{ f(\tau_n) \}$, and $\mathbb{V}\{ f(\tau_n) \}$ agree with the exact values for small $n$. First, Tables \ref{table:1} and \ref{table:2} show the CP rank and the probabilities of all unlabeled unordered binary trees for $n=1$ to 8 under each of three models: uniformly random unlabeled unordered trees, uniformly random leaf-labeled trees, and Yule--Harding leaf-labeled trees. The much larger CP rank for the caterpillar compared to the pseudocaterpillar (and all other trees) is already visible for $n=8$.

Figure \ref{fig:1} plots the values of $\mathbb{E}\{\log_2 \log f(\tau_n)\}$, the mean height $H_n$, and the asymptotic approximation for $\mathbb{E}\{\log_2 \log f(\tau_n)\}$ under the three models. For each of the three models, we can observe similar shapes in plots for its three quantities. The values are greatest for the uniformly random leaf-labeled trees, with asymptotic approximation $2 \sqrt {\pi n} \approx 3.54491 \sqrt{n}$, followed by the uniformly random unlabeled unordered trees, with asymptotic approximation $3.13699 \sqrt{n}$, and finally, the Yule--Harding leaf-labeled trees, with asymptotic approximation $4.31107 \log n$. 

Figures \ref{fig:2} and \ref{fig:3} plot the exact mean and variance of $f(\tau_n)$ under the three models alongside the asymptotic approximation based on the contribution of the caterpillar tree, taking the $\log_2 \log$ of these quantities to produce a comparable scale to Figure \ref{fig:1}. In the figure, we observe that even for quite small $n$, the exact mean and variance are closely approximated by the asymptotic $\pi_n c_{n-1}$. The mean and variance are greatest for the uniformly random leaf-labeled trees, for which $\pi_n \sim \sqrt{\pi} ( n^{3/2} )(0.5^n)$ \eqref{eq:piCatalan}, followed by the uniformly random unlabeled unordered trees, with asymptotic approximation $\pi_n \sim \lambda^{-1} n^{3/2} \rho^{-n} \approx 3.13699 (n^{3/2})(0.40270^n)$ \eqref{eq:piOtter}. For the Yule--Harding model, caterpillars are least probable \eqref{eq:pirbst}.

\bigskip
\section{Discussion}

We have analyzed the Colijn--Plazzotta rank of rooted binary trees, showing that the rank of a tree is largely determined by its height. Indeed, the ranking proceeds through all trees of a given height $h$ before moving on to trees of height $h+1$. We have also obtained asymptotic properties of the trees under three different models for selecting random trees, finding in particular the asymptotics of $\E \{\log_2 \log f(\tau_n) \}$ for random trees $\tau_n$. The asymptotic mean and variance of the CP rank across trees with $n$ leaves depend only on the probability and CP rank of the $n$-leaf caterpillar, as the product of the probability and the CP rank of the caterpillar grows faster than the next-highest rank. A summary of mathematical results appears in Table~\ref{table:3}. 

Numerical investigations clarify a pattern observable in the mathematical results, namely that the ``uniform'' model---uniformly random leaf-labeled trees---has CP ranks greater than the Yule--Harding model on leaf-labeled trees (Figures \ref{fig:1}--\ref{fig:3}). This observation can be viewed as a consequence of the greater probability of the caterpillar shape in the uniform \eqref{eq:piCatalan} than in the Yule--Harding model \eqref{eq:pirbst}. 

It has been suggested that CP rank can serve as a measure of tree balance and imbalance in empirical studies~\cite{FischerEtAl2023, Rosenberg2021}. We have found that as $n$ grows, the CP rank of the caterpillar grows so fast that for both the uniform and Yule--Harding models on leaf-labeled trees, the mean CP rank across trees with $n$ leaves is asymptotically determined by the contribution of the caterpillar. Hence, as a balance statistic beyond the smallest tree sizes, the use of CP rank $f(\tau)$ would amount primarily to distinguishing caterpillars from non-caterpillars. A potentially more suitable statistic is $\log_2 \log f(\tau)$, which places the CP ranks of different trees on a similar scale.

The results have been obtained by connecting studies of CP rank as a quantity of mathematical phylogenetics to the extensive literature on tree height in studies grounded in theoretical computer science. As has been demonstrated here, such applications of theoretical computer science results on tree properties have the potential to provide solutions to unsolved problems in mathematical phylogenetics. 

Although we have obtained the asymptotics of the mean and variance of the CP rank under the uniform and Yule--Harding models---the two models for which the mean and variance were noted by \cite{FischerEtAl2023} as open problems---we have not commented on the \emph{exact} mean and variance. For practical applications of CP rank, an understanding of the asymptotics likely suffices, but we note that the precise determination of the mean and variance of the CP rank remains an open problem.

\medskip
\noindent \textsc{Acknowledgments.}
This project developed from conversations at the Analysis of Algorithms meeting in Bath, United Kingdom (AofA2024), and we are grateful to the conference organizers. 

\medskip
\noindent \textsc{Funding.} We acknowledge the Natural Sciences and Engineering Research Council of Canada (LD), National Institutes of Health grant R01 HG005855 (NAR), and Swedish Research Council/Vetenskapsr\aa det grant 2022-04030 (SW).

\medskip
\noindent \textsc{Data availability statement.} The study has no associated data.

\bigskip
\bibliographystyle{abbrv}
\bibliography{ColijnPlazzotta}

\clearpage
\begin{table}[tb]
\begin{tabular}{|c|P{2cm}|c|c|P{3.5cm}|P{2.2cm}|P{2.7cm}|}
 \hline 
     & & & & \multicolumn{3}{|c|}{Model} \\ \cline{5-7}
 $n$ & \centering $t_n$ & $f(t_n)$ & Height & Unlabeled uniform unordered & Leaf-labeled uniform & Leaf-labeled Yule--Harding \\ \hline 
\footnotesize
1 & \centering \includegraphics[scale=0.16]{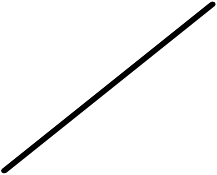}    &    1 & 0 &    1 & 1    & 1    \\
2 & \centering \includegraphics[scale=0.16]{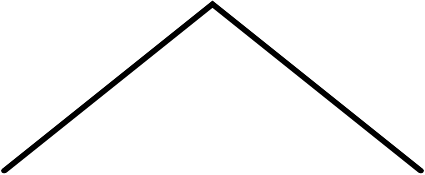}    &    2 & 1 &    1 & 1    & 1    \\
3 & \centering \includegraphics[scale=0.16]{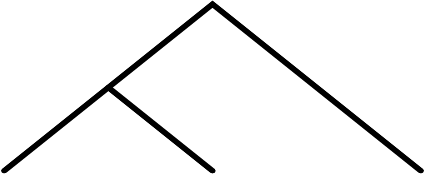}    &    3 & 2 &    1 & 1    & 1    \\
4 & \centering \includegraphics[scale=0.16]{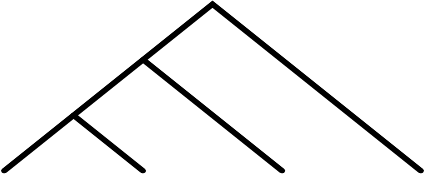}  &    5 & 3 &  1/2 & 4/5  & 2/3  \\
4 & \centering \includegraphics[scale=0.16]{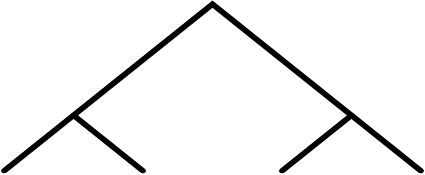}  &    4 & 2 &  1/2 & 1/5  & 1/3  \\
5 & \centering \includegraphics[scale=0.16]{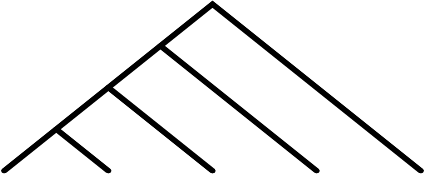}  &   12 & 4 &  1/3 & 4/7  & 1/3  \\
5 & \centering \includegraphics[scale=0.16]{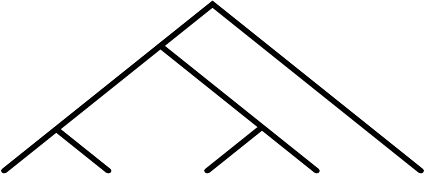}  &    8 & 3 &  1/3 & 1/7  & 1/6  \\
5 & \centering \includegraphics[scale=0.16]{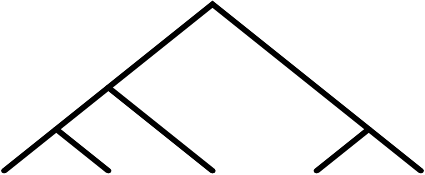}  &    6 & 3 &  1/3 & 2/7  & 1/2  \\
6 & \centering \includegraphics[scale=0.16]{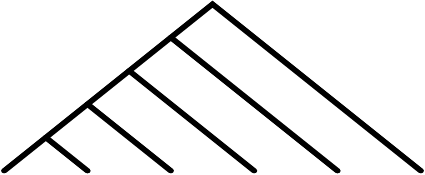}  &   68 & 5 &  1/6 & 8/21 & 2/15 \\
6 & \centering \includegraphics[scale=0.16]{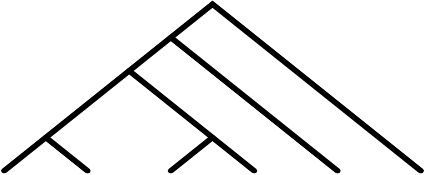}  &   30 & 4 &  1/6 & 2/21 & 1/15 \\
6 & \centering \includegraphics[scale=0.16]{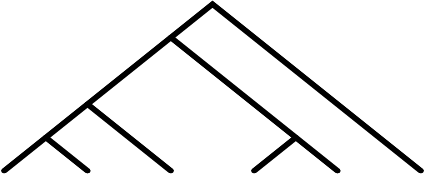}  &   17 & 4 &  1/6 & 4/21 & 1/5  \\
6 & \centering \includegraphics[scale=0.16]{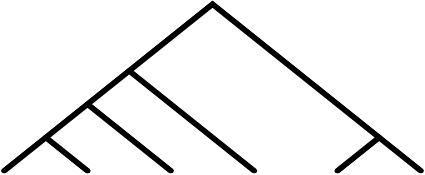}  &   13 & 4 &  1/6 & 4/21 & 4/15 \\
6 & \centering \includegraphics[scale=0.16]{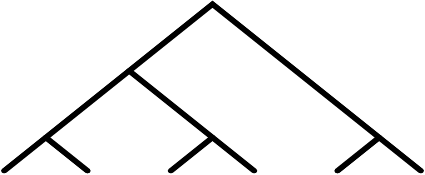}  &    9 & 3 &  1/6 & 1/21 & 2/15 \\
6 & \centering \includegraphics[scale=0.16]{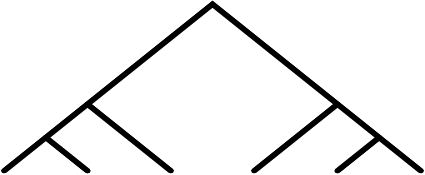}  &    7 & 3 &  1/6 & 2/21 & 1/5  \\
7 & \centering \includegraphics[scale=0.16]{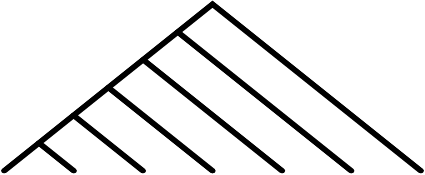}  & 2280 & 6 & 1/11 & 8/33 & 2/45 \\
7 & \centering \includegraphics[scale=0.16]{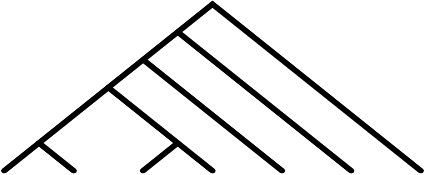}  &  437 & 5 & 1/11 & 2/33 & 1/45 \\
7 & \centering \includegraphics[scale=0.16]{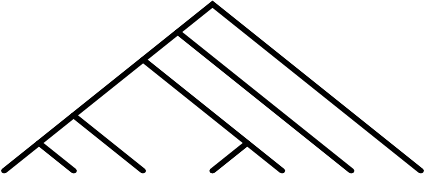}  &  138 & 5 & 1/11 & 4/33 & 1/15 \\
7 & \centering \includegraphics[scale=0.16]{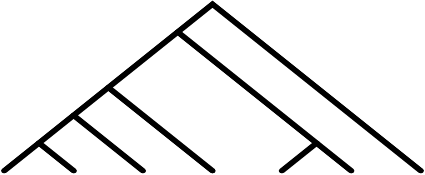}  &   80 & 5 & 1/11 & 4/33 & 4/45 \\
7 & \centering \includegraphics[scale=0.16]{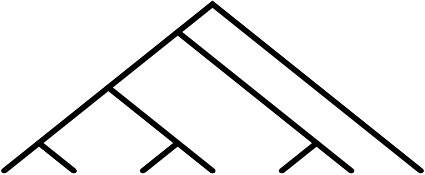}  &   38 & 4 & 1/11 & 1/33 & 2/45 \\
7 & \centering \includegraphics[scale=0.16]{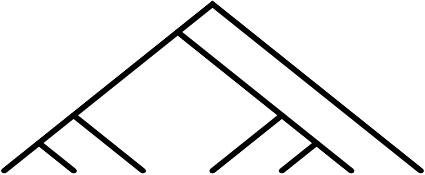}  &   23 & 4 & 1/11 & 2/33 & 1/15 \\
7 & \centering \includegraphics[scale=0.16]{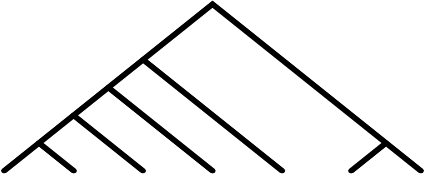}  &   69 & 5 & 1/11 & 4/33 & 1/9  \\
7 & \centering \includegraphics[scale=0.16]{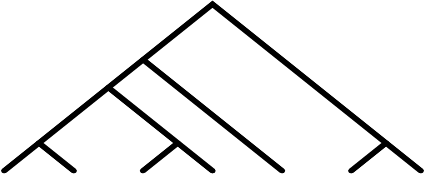}  &   31 & 4 & 1/11 & 1/33 & 1/18 \\
7 & \centering \includegraphics[scale=0.16]{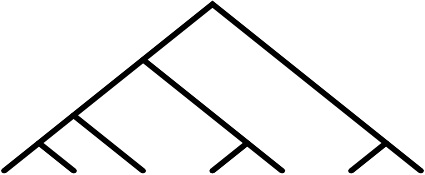}  &   18 & 4 & 1/11 & 2/33 & 1/6  \\
7 & \centering \includegraphics[scale=0.16]{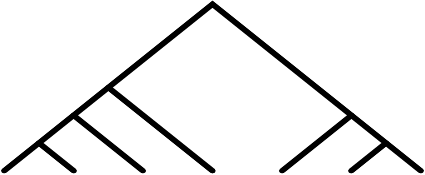} &   14 & 4 & 1/11 & 4/33 & 2/9  \\
7 & \centering \includegraphics[scale=0.16]{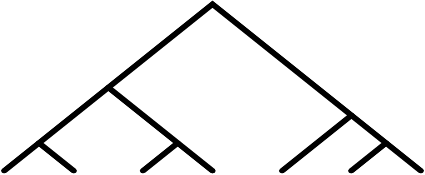} &   10 & 3 & 1/11 & 1/33 & 1/9  \\
\hline
\end{tabular}
\caption{CP rank $f(t_n)$ and probability under three models for all unlabeled unordered binary trees $t_n$ with $n$ leaves, $1 \leq n \leq 7$. For unlabeled uniform unordered trees, the probability is the reciprocal of the number of such trees, the Wedderburn--Etherington number (OEIS A001190 \cite{OEIS}). For leaf-labeled uniform trees, it is the ratio of $n! / 2^{s(t_n)}$ (the number of ways of labeling shape $t_n$, where the number of symmetric nodes $s(t_n)$ is the number of internal nodes whose two descendant subtrees have the same unlabeled shape) and $(2n-3)!!$, the number of leaf-labeled trees with $n$ leaves \eqref{eq:labeled}. For leaf-labeled Yule--Harding trees, it is the ratio of $[n! / 2^{s(t_n)}][(n-1)!/\prod_{r=2}^n (r-1)^{d_r(t_n)}]$ and $n!(n-1)!/2^{n-1}$, where $d_r(t_n)$ is the number of internal nodes of $t_n$ with $r$ descendant leaves, $(n-1)!/\prod_{r=2}^n (r-1)^{d_r(t_n)}$ gives the number of \emph{labeled histories} of a leaf-labeled tree (the number of sequences in which the tree can be produced by a sequence of bifurcations), and  $n!(n-1)!/2^{n-1}$ is the total number of labeled histories for $n$ labeled leaves.}
\label{table:1}
\end{table}
\clearpage
\begin{table}[tb]
\begin{tabular}{|c|P{2cm}|c|c|P{3.5cm}|P{2.2cm}|P{2.7cm}|}
 \hline 
     & & & & \multicolumn{3}{|c|}{Model} \\ \cline{5-7}
 $n$ & \centering $t_n$ & $f(t_n)$ & Height & Unlabeled uniform unordered & Leaf-labeled uniform & Leaf-labeled Yule--Harding \\ \hline 
\footnotesize
8 & \centering \includegraphics[scale=0.2]{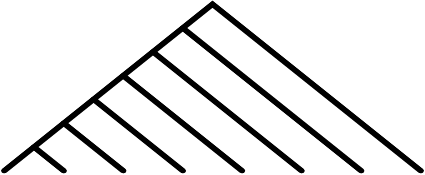}  & 2598062 & 7 & 1/23 & 64/429 & 4/315 \\
8 &\centering \includegraphics[scale=0.2]{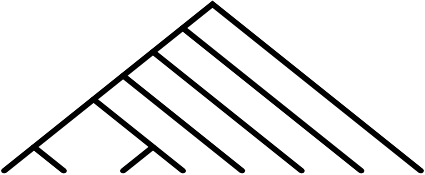}   & 95268   & 6 & 1/23 & 16/429 & 2/315 \\
8 &\centering \includegraphics[scale=0.2]{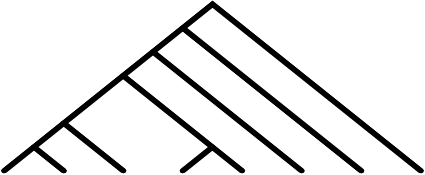}   & 9455    & 6 & 1/23 & 32/429 & 2/105 \\
8 &\centering \includegraphics[scale=0.2]{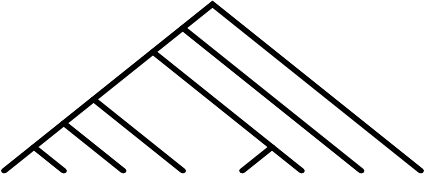}   & 3162    & 6 & 1/23 & 32/429 & 8/315 \\
8 &\centering \includegraphics[scale=0.2]{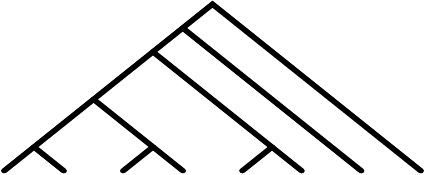}   & 705     & 5 & 1/23 & 8/429  & 4/315 \\
8 &\centering \includegraphics[scale=0.2]{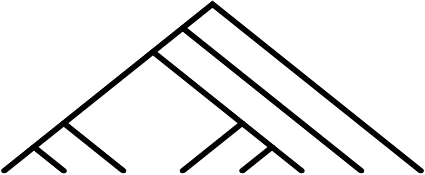}   & 255     & 5 & 1/23 & 16/429 & 2/105 \\
8 &\centering \includegraphics[scale=0.2]{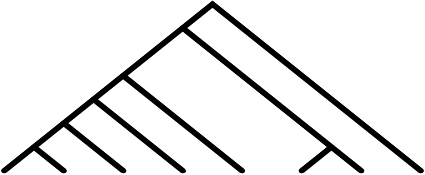}   & 2348    & 6 & 1/23 & 32/429 & 2/63  \\
8 &\centering \includegraphics[scale=0.2]{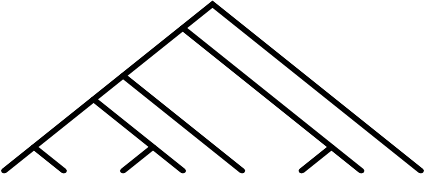}   & 467     & 5 & 1/23 & 8/429  & 1/63  \\
8 &\centering \includegraphics[scale=0.2]{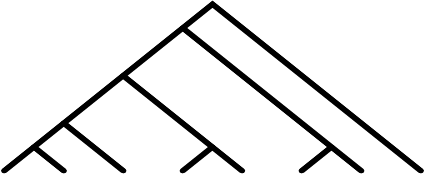}   & 155     & 5 & 1/23 & 16/429 & 1/21  \\
8 &\centering \includegraphics[scale=0.2]{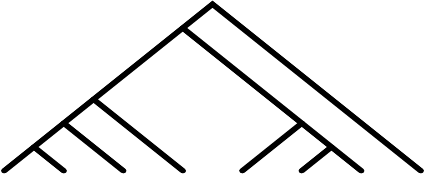}  & 93      & 5 & 1/23 & 32/429 & 4/63  \\
8 &\centering \includegraphics[scale=0.2]{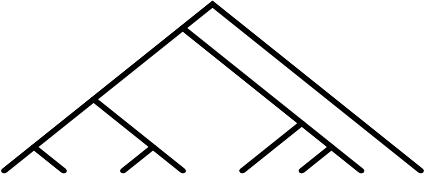}  & 47      & 4 & 1/23 & 8/429  & 2/63  \\
8 &\centering \includegraphics[scale=0.2]{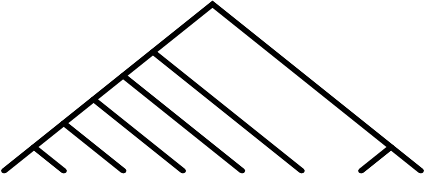}  & 2281    & 6 & 1/23 & 32/429 & 4/105 \\
8 &\centering \includegraphics[scale=0.2]{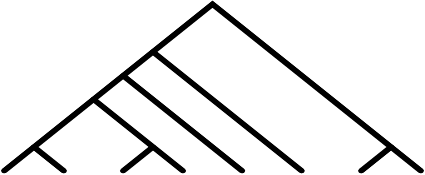}  & 438     & 5 & 1/23 & 8/429  & 2/105 \\
8 &\centering \includegraphics[scale=0.2]{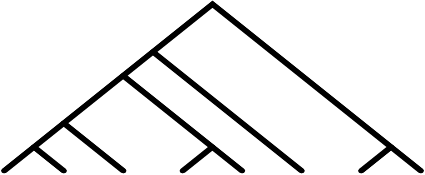}  & 139     & 5 & 1/23 & 16/429 & 2/35  \\
8 &\centering \includegraphics[scale=0.2]{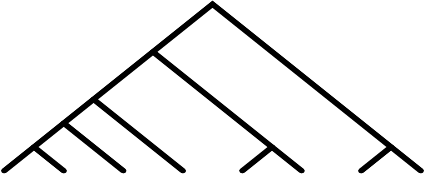}  & 81      & 5 & 1/23 & 16/429 & 8/105 \\
8 &\centering \includegraphics[scale=0.2]{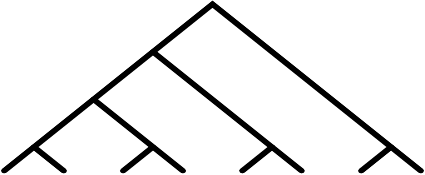}  & 39      & 4 & 1/23 & 4/429  & 4/105 \\
8 &\centering \includegraphics[scale=0.2]{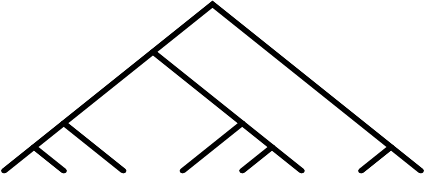}  & 24      & 4 & 1/23 & 8/429  & 2/35  \\
8 &\centering \includegraphics[scale=0.2]{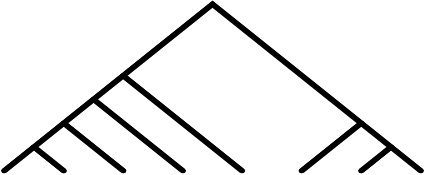}  & 70      & 5 & 1/23 & 32/429 & 2/21  \\
8 & \centering \includegraphics[scale=0.2]{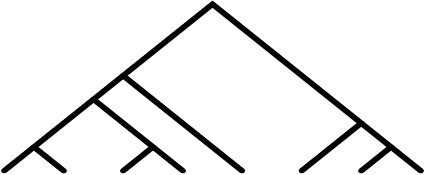} & 32      & 4 & 1/23 & 8/429  & 1/21  \\
8 &\centering \includegraphics[scale=0.2]{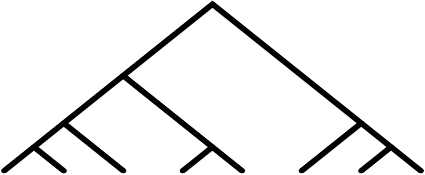}  & 19      & 4 & 1/23 & 16/429 & 1/7   \\
8 &\centering \includegraphics[scale=0.2]{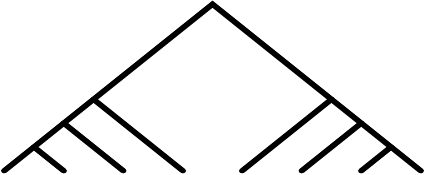}  & 16      & 4 & 1/23 & 16/429 & 4/63  \\
8 &\centering \includegraphics[scale=0.2]{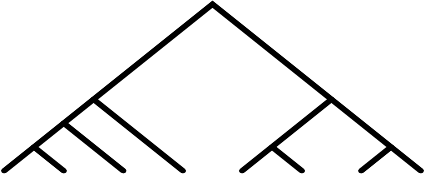}  & 15      & 4 & 1/23 & 8/429  & 4/63  \\
8 &\centering \includegraphics[scale=0.2]{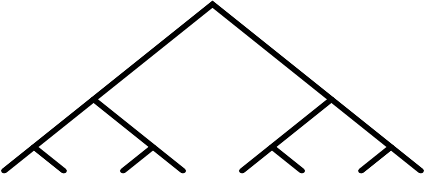}  & 11      & 3 & 1/23 & 1/429  & 1/63  \\ \hline
\end{tabular}
\caption{CP rank $f(t_n)$ and probability under three models for all unlabeled unordered binary trees $t_n$ with $n$ leaves, $n=8$. The table design follows Table \ref{table:1}.}
\label{table:2}
\end{table}
\clearpage
\begin{table}[tbh]
    \centering
    \begin{tabular}{|p{3cm}|p{3.5cm}|p{3.5cm}|p{3.5cm}|}
        \hline
         & \multicolumn{3}{|c|}{Model} \\ \cline{2-4}
\hfil Property \hfil&  \hfil Unlabeled  \hfil & \hfil Leaf-labeled \hfil & \hfil Leaf-labeled \hfil \\ 
&   \hfil uniform unordered \hfil & \hfil uniform \hfil  & \hfil Yule--Harding \hfil \\
\hline
\phantom{M} & & & \\
\hfil$\E\{\log \log f(\tau_n)\}$\hfil & \hfil Theorem \ref{uniformtree}\hfil & \hfil Theorem \ref{cladogram} \hfil& \hfil Theorem \ref{rbst} \hfil\\
\phantom{M} & & & \\
\hfil$\log f(\tau_n)$\hfil  &                \hfil - \hfil                        &                 \hfil    - \hfil                   & \hfil Theorem \ref{rbst2}\hfil \\
\phantom{M} & & & \\
\hfil$\E\{f(\tau_n)\}$\hfil & \hfil Theorem \ref{mean+variance} \hfil & \hfil Theorem \ref{mean+variance} \hfil & \hfil Theorem \ref{mean+variance} \hfil \\
\phantom{M} & & & \\
\hfil$\V\{f(\tau_n)\}$\hfil & \hfil Theorem \ref{mean+variance} \hfil & \hfil Theorem \ref{mean+variance} \hfil & \hfil Theorem \ref{mean+variance} \hfil \\
\phantom{M} & & & \\
         \hline 
    \end{tabular}
    \smallskip
    \caption{Summary of the main asymptotic results under three models. $\tau_n$ refers to a random tree with $n$ leaves under the model, and $f(\tau_n)$ is the associated random CP rank. Properties of random trees are the same for uniformly random leaf-labeled unordered trees and for uniformly random unlabeled ordered trees.}
    \label{table:3}
\end{table}

\clearpage
\begin{figure}[tb]
\centering
\includegraphics[width=0.85\textwidth]{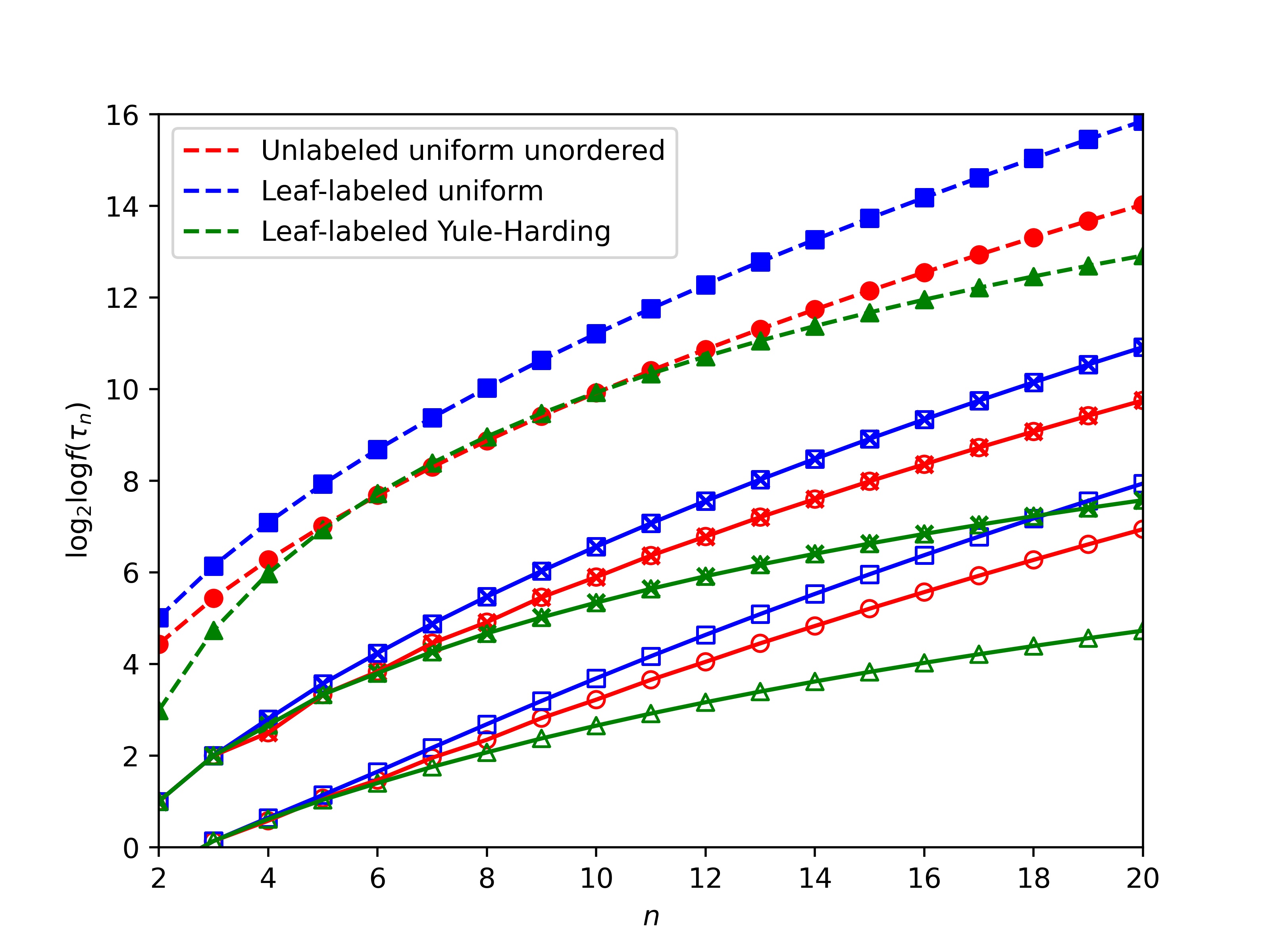}
\caption{Expected value of the double logarithm of CP rank, $\mathbb{E} \{ \log_2 \log f(\tau_n) \}$, under three models, for $n=2$ to 20: uniformly random unlabeled unordered binary trees, uniformly random leaf-labeled binary trees, and Yule--Harding leaf-labeled binary trees. Exact values of $\mathbb{E} \{ \log_2 \log f(\tau_n) \}$ (open symbols) appear alongside exact values of the expected tree height $\mathbb{E} \{ H_n \}$ (open symbols superimposed with crosses) under the three models and the asymptotic expressions (closed symbols, dashed lines): $\kappa \sqrt{n}$ for unlabeled uniform unordered (Theorem \ref{uniformtree}ii), $2 \sqrt{\pi n}$ for leaf-labeled uniform (Theorem \ref{cladogram}), and $\alpha \log n$ for leaf-labeled Yule--Harding (Theorem \ref{rbst}). $\kappa \approx 3.13699$, $\alpha \approx 4.31107$.}
\label{fig:1}
\end{figure}
\clearpage
\begin{figure}[tb]
\centering
\includegraphics[width=0.85\textwidth]{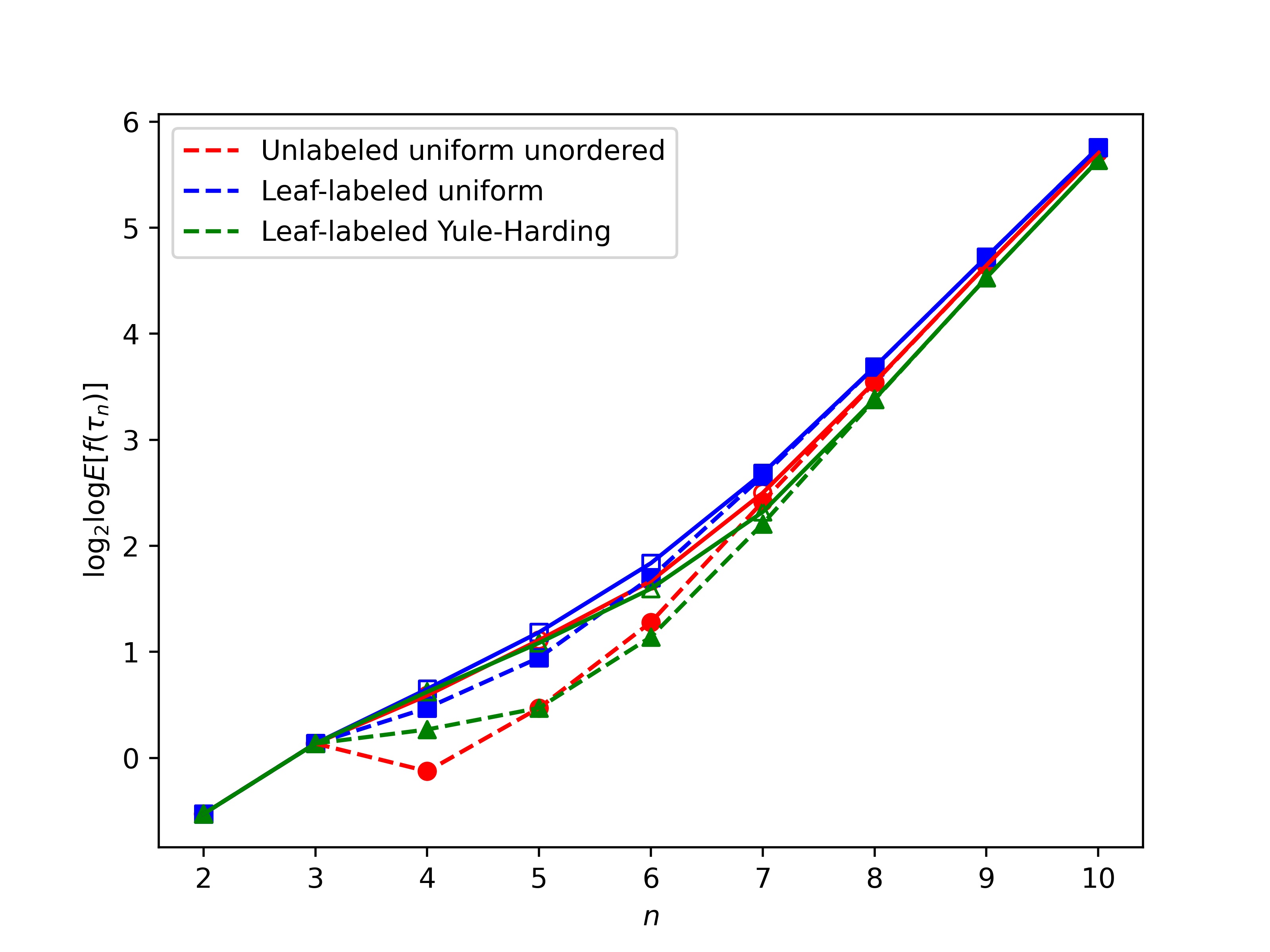}
\caption{Expected value of the CP rank, $\mathbb{E} \{ f(\tau_n) \}$, under three models, for $n=2$ to 10: uniformly random unlabeled unordered binary trees, uniformly random leaf-labeled binary trees, and Yule--Harding leaf-labeled binary trees. Exact values of $\log_2 \log \mathbb{E} \{ f(\tau_n) \}$ (open symbols) appear alongside asymptotic expressions $\log_2 \log (\pi_n c_{n-1})$ from Theorem \ref{mean+variance} (closed symbols, dashed lines), where $\pi_n$ follows \eqref{eq:piCatalan} for leaf-labeled uniform and \eqref{eq:pirbst} for leaf-labeled Yule--Harding and $c_{n-1}$ is the CP rank of the caterpillar with $n-1$ internal nodes and $n$ leaves \eqref{eq:recursion-c}. For unlabeled uniform unordered, $\pi_n$ is computed as the exact $1/U_n$, where $U_n$ is the Wedderburn--Etherington number \eqref{eq:Wedderburn}.}
\label{fig:2}
\end{figure}
\clearpage
\begin{figure}[tb]
\centering
\includegraphics[width=0.85\textwidth]{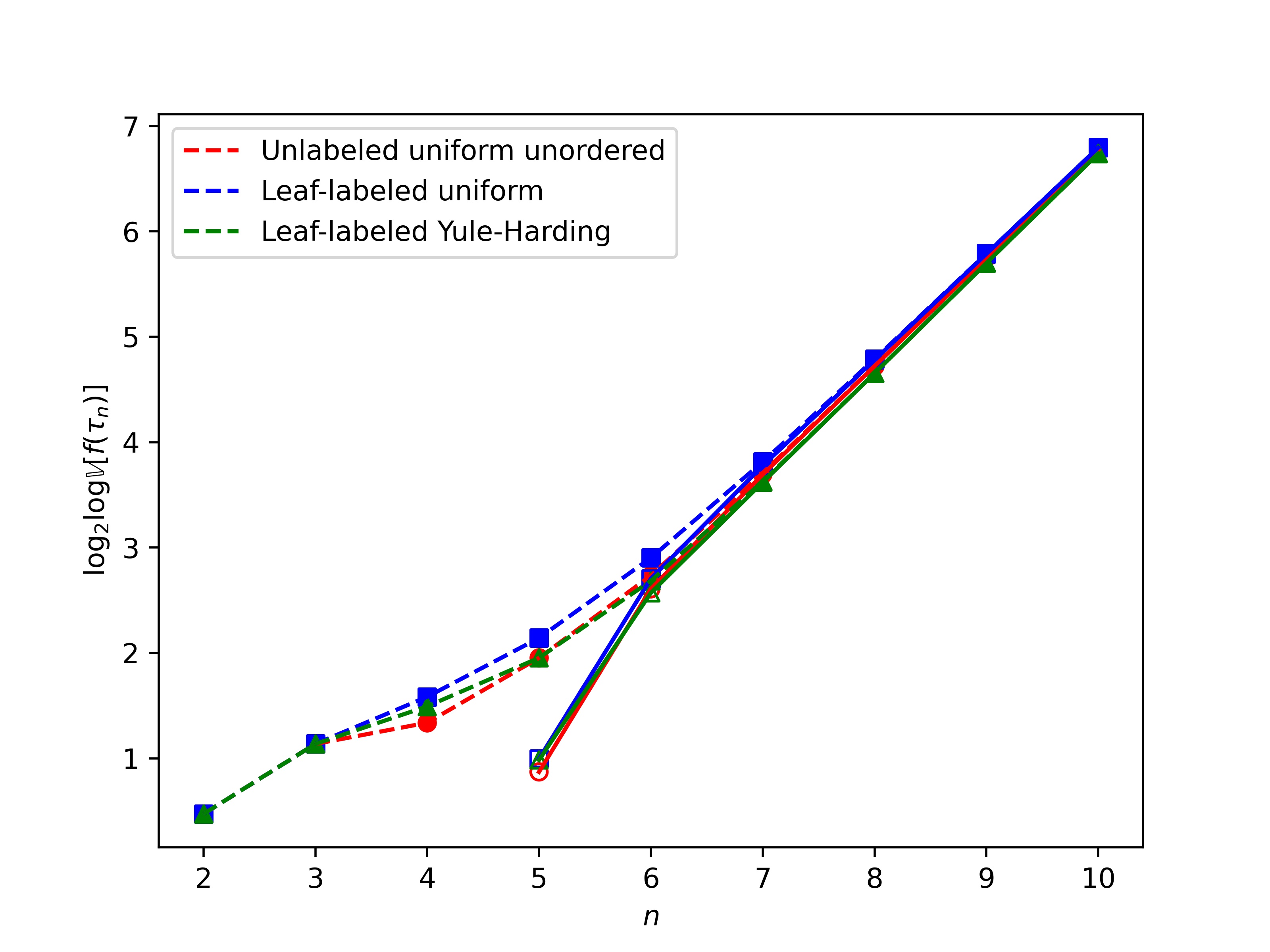}
\caption{Variance of the CP rank, $\mathbb{E} \{ f(\tau_n) \}$, under three models, for $n=2$ to 10: uniformly random unlabeled unordered binary trees, uniformly random leaf-labeled binary trees, and Yule--Harding leaf-labeled binary trees. Exact values of $\log_2 \log \mathbb{V} \{ f(\tau_n) \}$ (open symbols) appear alongside  asymptotic expressions $\log_2 \log (\pi_n c_{n-1}^2)$ from Theorem \ref{mean+variance} (closed symbols, dashed lines), where $\pi_n$ follows \eqref{eq:piCatalan} for leaf-labeled uniform and \eqref{eq:pirbst} for leaf-labeled Yule--Harding and $c_{n-1}$ is the CP rank of the caterpillar with $n-1$ internal nodes and $n$ leaves \eqref{eq:recursion-c}. For unlabeled uniform unordered, $\pi_n$ is computed using the exact $1/U_n$, where $U_n$ is the Wedderburn--Etherington number \eqref{eq:Wedderburn}.}
\label{fig:3}
\end{figure}

\clearpage

\end{document}